\documentclass[12pt,reqno]{amsart}

\usepackage{amssymb,latexsym}

\usepackage{enumerate}
\allowdisplaybreaks
\usepackage[french,english]{babel}
\usepackage{amsmath}
\usepackage{graphicx}
\usepackage{amssymb}
\usepackage{bbm}
\usepackage{amsthm,mathtools}
\usepackage{ulem}
\usepackage{geometry}
\usepackage{tikz-cd}
\usepackage{mathrsfs}
\usepackage[colorinlistoftodos]{todonotes}
\usepackage{enumitem}
\usepackage{verbatim}
\usepackage[foot]{amsaddr}
\usepackage{dsfont}
\usepackage{cite}
\usepackage[T1]{fontenc}

\makeatletter

\@namedef{subjclassname@2010}{
	
	\textup{2020} Mathematics Subject Classification}

\makeatother
\newtheorem{thm}{Theorem}[section]
\newtheorem*{thm*}{Theorem}

\newtheorem{cor}{Corollary}[section]%Corollary单独排序，不和其他混淆

\newtheorem{lem}[thm]{Lemma}

\theoremstyle{definition}

\newtheorem{rem}{Remark}[section]%Remark单独排序，不和其他混淆
\numberwithin{equation}{section}

\newcommand{\inv}{^{-1}}

\newcommand{\mbz}{\mathbb{Z}}

\newcommand{\mbn}{\mathbb{N}}

\newcommand{\mce}{\mathcal{E}}

\newcommand{\mcg}{\mathcal{G}}

\newcommand{\mcp}{\mathcal{P}}

\newcommand{\mme}{\mathrm{e}}
\newcommand{\mmi}{\mathrm{i}}

\newcommand{\re}{\textup{Re}}
\newcommand{\im}{\textup{Im}}

\usepackage{hyperref}
\hypersetup{colorlinks=true,linkcolor=blue,anchorcolor=blue,citecolor=blue}
\usepackage{color}
\newcommand{\newabstract}[1]{%
	\par\bigskip
	\csname otherlanguage*\endcsname{#1}%
	\csname captions#1\endcsname
	\item[\hskip\labelsep\scshape\abstractname.]
}

\begin{document}

	\baselineskip=17pt

	\title[Joint extreme values of Dirichlet \(L\)-functions and their logarithmic derivatives]{Joint extreme values of Dirichlet \(L\)-functions and their logarithmic derivatives}

    \author{Shengbo Zhao\textsuperscript{1}}
	\address{1.School of Mathematical Sciences, Key Laboratory of Intelligent Computing and Applications(Ministry of Education), Tongji University, Shanghai 200092, P. R. China}	\email{shengbozhao@hotmail.com}

	\begin{abstract} 
    In this paper, we establish joint extreme values of Dirichlet \(L\)-functions and their logarithmic derivatives using the resonance method. Our results extend previous work of Aistleitner et al. (2019) and Yang (2023).
	\end{abstract}
	
    \keywords{Extreme values, Dirichlet \(L\)-functions, logarithmic derivatives, resonance method. }
	
	\subjclass[2020]{Primary 11M06, 11M20, 11N37.}
	
	\maketitle

\section{Introduction}
Dirichlet \(L\)-functions \(L(s,\chi)\) form a fundamental class of objects in analytic number theory, playing a central role in the study of various arithmetic, geometric, and algebraic problems, where \(s = \sigma + \mmi t\). Understanding the behavior of their values for \(\sigma \in[1/2,1]\) is a central problem. The study of Dirichlet \(L\)-functions has a long history. Over the past decades, the introduction of the notion of families of \(L\)-functions has led to significant progress. In particular, modeling such families by characteristic polynomials of random matrices has provided powerful heuristics and yielded celebrated breakthroughs in the field. 
\par
Extreme values of \(L\)-functions reflect the distribution of their values and are closely connected to problems concerning character sums and class numbers. When \(\sigma=1\), by using high moments of \(L\)-functions and results on sums involving the divisor function, Granville and Soundararajan \cite{Granville2006Ramanujan} refined Littlewood’s earlier work and showed that, for sufficiently large primes \(q\), there exist at least \(q^{1-1/A}\) characters \(\chi \pmod{q}\) such that
\[
|L(1,\chi)| \ge \mme^\gamma(\log_2 q+\log_3 q-\log_4 q-\log A+O(1))
\]
for any \(A \ge 10\). Here, \(\gamma\) denotes the Euler–Mascheroni constant. Throughout this paper, we assume that \(q\) is a sufficiently large prime, \(\chi\) is a character modulo \(q\), and \(\log_k\) denotes the \(k\)-th iterated logarithm. The currently best known result was established by Aistleitner, Mahatab, Munsch, and Peyort \cite{Aistleitner2019QJMath}. Using the resonance method, they removed the term \(-\log_4 q\) inside the parentheses and proved there exists a non-principal character \(\chi\) such that
\begin{align}\label{AMMP1line}
    |L(1,\chi)| \ge \mme^\gamma(\log_2 q+\log_3 q-C+o(1)),
\end{align}
where \(C=1+\log_2 4 \approx 1.33\). Notably, their result matches the order predicted by probabilistic models. Both \cite{Aistleitner2019QJMath} and \cite{Granville2006Ramanujan} also provide quantitative information on the frequency of such characets \(\chi\) for which \(|L(1,\chi)|\) attains extreme values.
\begin{rem}
    In \cite{Aistleitner2019QJMath}, \(q\) is merely taken to be sufficiently large. In fact, the assumption that \(q\) is prime is essential, and counterexamples can be found in the work of D. Yang; see \cite[Remark 3]{yang2023BLMS}. In the subsequent proofs, we will repeatedly use the assumption that \(q\) is prime.
\end{rem}
When \(\sigma \in (1/2,1)\), Lamzouri \cite{lamzouri2011IMRN} established precise results on the distribution of extreme values for several families of \(L\)-functions. For the family of \(L\)-functions associated with Legendre symbols \(\chi_p=\big(\frac{\cdot}{p} \big) \), Lamzouri showed, under the generalized Riemann hypothesis (GRH), that for sufficiently large \(x\), there are \(\gg x^{1/2}\) primes \(p \le x\) such that 
\[
\log L(\sigma+\mmi t,\chi_p) \ge (\beta(s)+o(1))(\log x)^{1-\sigma}(\log_2 x)^{-\sigma}.
\]
When \(t=0\), this refines earlier results of Montgomery \cite{Montgomery1977} on the Riemann zeta function in the critical strip, under the Riemann hypothesis (RH); see \cite[Remark 3]{lamzouri2011IMRN}. Further related results can be found in \cite{Granville2003Geom}. Moreover, Aistleitner et al. \cite{Aistleitner2019QJMath} proved that there exists a non-principal character \(\chi\) such that 
\begin{align}\label{AMMPcriticalstrip}
    \log |L(\sigma,\chi)| \ge C(\sigma)(\log q)^{1-\sigma} (\log_2 q)^{-\sigma}
\end{align}
for some constant \(C(\sigma)>0\), which is conjectured to be optimal up to a constant. This result was later refined by X. Xiao and Q. Yang \cite{XiaoYang2022BAMS}, who provided an explicit estimate for \(C(\sigma)\).
\par

The study of extreme values of logarithmic derivatives of \(L\)-functions is also of independent interest. Their values at the point \(s=1\) are related to the Euler–Kronecker constants of global fields, with cyclotomic fields being a prominent case. Moreover, as one of the classical examples of \(L^\prime/L(s,\chi)\), logarithmic derivatives of the Riemann zeta function are closely related to the distribution of primes, as it appears in the proof of the prime number theorem. It has also been conjectured by D. Yang \cite[Conjecture 10.1]{yang2023omega} that extreme values of \(L^\prime/L(s,\chi)\) may be related to the ratio between extreme values of \(L\)-functions and their derivatives \(L^\prime(s,\chi)\).
\par
There are few results on extreme values of logarithmic derivatives of \(L\)-functions. Mourtada and Murty \cite{Mourtada2013IJNT} showed that there exist infinitely many fundamental discriminants \(D\) such that 
\[
\frac{L^\prime}{L}(1,\chi_D) \ge \log_2 |D|+O(1),
\]
where \(\chi_D\) denotes the quadratic Dirichlet character of conductor \(D\). Moreover, under GRH, they also proved that there are \(\gg x^{1/2}\) primes \(p \le x\) such that
\[
\frac{L^\prime}{L}(1,\chi_p) \ge \log_2 x +\log_3 x +O(1)
\]
for sufficiently large \(x\). 
\par
D. Yang \cite{yang2023omega} studied the case for \(\sigma\in(1/2,1]\) and proved that there exists a non-principal character \(\chi\) such that 
\begin{align}\label{Yangomgea1line}
    -\re \frac{L^\prime}{L}(1,\chi) \ge \log_2 q +\log_3 q+ C_1 +o(1),
\end{align}
and for \(\sigma \in (1/2,1)\),
\begin{align}\label{Yangomgeacriticalstrip}
    -\re \frac{L^\prime}{L}(\sigma,\chi) \ge C_2(\sigma)(\log q)^{1-\sigma}(\log_2 q)^{-\sigma},
\end{align}
where \(C_1 \) and \(C_2(\sigma)\) are positive constants that can be computed effectively. For all primes \(q \ge 10^{10}\), he also showed that there exists a non-principal character \(\chi\) such that 
\begin{align}\label{Yangomegatheta}
    \re\Big(\mme^{-\mmi\theta}\frac{L^\prime}{L}(1,\chi)\Big)\ge \log_2 q +O(\log_3 q).
\end{align}
This result captures extreme values in different directions. Compared with \eqref{Yangomegatheta}, the case \(\theta=\pi/2\) makes \eqref{Yangomgea1line} more explicit. Meanwhile, he established corresponding results for the Riemann zeta function and quantitative estimates on the frequency of extreme values when \(\sigma=1\).
\par
In this paper, we continue the study of Q. Yang and S. Zhao \cite{qiyuyang2026joint}, which was motivated by the work of Levinson \cite{levinson1972harmonic}. In \cite{qiyuyang2026joint}, joint extreme values of the Riemann zeta function at harmonic points were studied both on the 1-line and in the critical strip. Henceforth, we fix an integer \(\ell \in \mbz^+\). We study joint extreme values over powers of Dirichlet characters \(\chi\), namely
\[
\chi,\chi^2,\dots,\chi^\ell.
\]
Our first theorem establishes joint extreme values of \(L\)-functions at the point \(s=1\).

\begin{thm}\label{thm1}
    Let \(\ell \ge 1\) be a fixed integer. For all sufficiently large primes \(q\), there exists a Dirichlet character \(\chi \, (\operatorname{mod} q)\) with \(\operatorname{ord}(\chi) > \ell\) such that
    \[
\prod_{j=1}^\ell \big|L(1,\chi^j)\big| \ge \mme^{\ell\gamma}\big\{(\log_2 q)^\ell + \ell(\log_3 q - C(\ell))(\log_2 q)^{\ell-1} + O\big((\log_2 q)^{\ell-2}(\log_3 q)^2\big) \big\},
    \]
    where \(C(\ell) = (\ell+1)/2 + \log_2 4\). The implied constant in the \(O(\cdot)\) only depends on \(\ell\).
\end{thm}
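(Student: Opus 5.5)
The plan is to adapt the resonance method of Aistleitner, Mahatab, Munsch and Peyrot \cite{Aistleitner2019QJMath} to the product $\prod_{j=1}^\ell L(1,\chi^j)$. We compare the two sums
\[
S_1=\sum_{\substack{\chi \,(\mathrm{mod}\, q)\\ \operatorname{ord}(\chi)>\ell}} |R(\chi)|^2\prod_{j=1}^\ell L(1,\chi^j),\qquad S_2=\sum_{\chi\,(\mathrm{mod}\, q)}|R(\chi)|^2 .
\]
Here $R(\chi)=\sum_{n\le N} r(n)\chi(n)$ is a resonator. Its coefficients $r$ are nonnegative and multiplicative, supported on squarefree numbers whose prime factors lie in a range $P<p\le X$. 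We take $X$ of size about $\log q\,\log_2 q$, and $N=q^{\theta}$ with $\theta<1$ to be optimised. We then use $\max_{\operatorname{ord}\chi>\ell}\prod_j|L(1,\chi^j)|\ge |S_1|/S_2$.

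\emph{Step 1 (truncation).} For $\chi^j$ non-principal, i.e. $\operatorname{ord}(\chi)>\ell$, Pólya--Vinogradov and partial summation give $L(1,\chi^j)=\sum_{k\le q^{A}}\chi^j(k)/k+O(q^{-\delta})$ for suitable $A,\delta$. Since $\log L(1,\chi^j)\ll \log_2 q$ crudely, the product equals
\[
\sum_{k_1,\dots,k_\ell\le q^{A}}\frac{\chi(k_1k_2^2\cdots k_\ell^\ell)}{k_1\cdots k_\ell}
\]
up to an error that is negligible after multiplying by $|R|^2$ and summing.

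\emph{Step 2 (re-inserting excluded characters).} The characters with $\operatorname{ord}(\chi)\le \ell$ number at most $\ell^2$. For them, we bound the truncated Dirichlet polynomial trivially by $(\log q)^{O(\ell)}$ and $|R(\chi)|^2$ by $(\sum_n r(n))^2$. We arrange that $(\sum r(n))^2(\log q)^{O(\ell)}=o\big(q\sum_n r(n)^2\big)$, which is possible because $q$ is prime and $N$ is a small power of $q$. Hence we may sum over all characters with negligible error.

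\emph{Step 3 (orthogonality).} Orthogonality modulo the prime $q$ then gives
\[
S_1\approx (q-1)\sum_{m\equiv n k_1k_2^2\cdots k_\ell^\ell\ (\mathrm{mod}\ q)}\frac{r(m)r(n)}{k_1\cdots k_\ell}.
\]
All terms are nonnegative, so we keep only the "diagonal'' solutions $m=nk_1k_2^2\cdots k_\ell^\ell$ with $k_j$ supported on primes in $(P,X]$. The remaining congruence solutions are discarded by positivity, and this is where primality of $q$ is used. Writing everything as an Euler product, $S_1/S_2$ is bounded below by
\[
\prod_{P<p\le X}\Big(1-\tfrac1p\Big)^{-\ell}\ \times\ (\text{corrections}),
\]
where the corrections come from three sources: the choice of $r(p)$ (as in \cite{Aistleitner2019QJMath}, something like $r(p)$ close to $1$ on the range $[\log q\,\log_2 q/\cdots,\,X]$), from the constraint $m\le N$, and from the constraint $k_j\le q^{A}$. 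Mertens' theorem turns $\prod_{p\le X}(1-1/p)^{-1}$ into $e^{\gamma}(\log X)\,(1+o(1))$.

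\emph{Step 4 (length constraint and constants).} We handle the constraint $m=nk_1\cdots k_\ell^\ell\le N$ by Rankin's trick: compare the restricted diagonal sum with the full Euler product weighted by $m^{\alpha}$ for a small $\alpha$. This is the step I expect to be the main obstacle, since it is what produces the constant $C(\ell)$. The higher powers $k_j^j$ make the Rankin loss depend on $j$, and summing the resulting per-$j$ losses (roughly proportional to $j/\ell$) should produce the extra $(\ell+1)/2$ in $C(\ell)=(\ell+1)/2+\log_2 4$ beyond the $\log_2 4$ of \cite{Aistleitner2019QJMath}. We therefore aim for a lower bound
\[
e^{\ell\gamma}\prod_{j=1}^\ell\big(\log_2 q+\log_3 q-c_j+o(1)\big)\quad\text{with}\quad\sum_j c_j=\ell\,C(\ell).
\]
Finally, we expand this product in powers of $\log_2 q$. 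The second-order terms are $O((\log_2 q)^{\ell-2}(\log_3 q)^2)$, which gives the stated expansion.
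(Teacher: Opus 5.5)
There is a genuine gap, and it is in the resonator itself. You take $r$ supported on squarefree integers, but the diagonal you keep in Step 3 is $m=nk_1k_2^2\cdots k_\ell^\ell$. Since $r(m)\neq 0$ forces $m$ squarefree, you get $k_2=\cdots=k_\ell=1$, so the factors $L(1,\chi^j)$ with $j\ge 2$ contribute only their constant term. Even for $j=1$, squarefree support is too weak on the $1$-line. The local factor of your diagonal ratio at $p$ is $1+\frac{r(p)}{p(1+r(p)^2)}\le 1+\frac{1}{2p}$, so the best you can get is $\asymp(\log X)^{1/2}$, not $e^{\gamma}\log X$. Discarding the primes $\le P$ costs a further factor $\asymp\log P$ per $j$ unless $P$ is bounded. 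So the claimed lower bound by $\prod_p(1-1/p)^{-\ell}$ does not follow. What is needed, and what the paper uses, is a completely multiplicative $r$ with $r(p)\to 1$, namely $r(p)=1-p/X$ for $p\le X=\log q\log_2 q/\delta$. Then $r(nK)=r(n)r(k_1)r(k_2)^2\cdots r(k_\ell)^\ell$, and the diagonal factorises as $\prod_{j=1}^\ell\prod_{p\le X}(1-r(p)^j/p)^{-1}$.

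Such an $r$, however, cannot be truncated at $N=q^\theta$. Under the weight $r(n)^2$ the typical $n$ has $\log n\asymp X\log X\asymp\log q(\log_2 q)^2$. Convergence of $\sum_n r(n)^2n^{\alpha}$ already at $p=2$ forces $\alpha\ll 1/X$, so $N^{-\alpha}=1+o(1)$ and Rankin's trick yields nothing. Shortening the resonator instead forces a smaller $X$ or smaller $r(p)$. You are then back to the $-\log_4 q$-type loss of the Granville--Soundararajan bound, which the long resonator of \cite{Aistleitner2019QJMath} was introduced to remove. A loss of that size in each factor is not absorbed by $O((\log_2 q)^{\ell-2}(\log_3 q)^2)$.

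The paper does not truncate at all: it takes $R(\chi)=\prod_{p\le X}(1-r(p)\chi(p))^{-1}$. All off-diagonal congruence solutions are dropped by positivity, with no need for $N<q$. The only price is the principal (and exceptional) character: $|R(\chi_0)|^2=q^{2/\delta+o(1)}$ must be beaten by $\sum_\chi|R(\chi)|^2\ge q^{1+(2-\log 4)/\delta+o(1)}$, i.e.\ $\delta>\log 4$. This is exactly your Step 2 condition. Your Steps 1--2 (P\'olya--Vinogradov truncation, which even covers the exceptional character) transfer unchanged to the long resonator.

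Accordingly, $C(\ell)$ does not come from a length constraint. The $\log_2 4$ comes from letting $\delta\to\log 4$. The $(\ell+1)/2$ comes from $r(p)^j\ge 1-jp/X$, which gives $\prod_{p\le X}\frac{p-1}{p-r(p)^j}\ge 1-\frac{j}{\log X}(1+o(1))$, and then $\prod_{j\le\ell}\bigl(1-\frac{j}{\log X}\bigr)=1-\frac{\ell(\ell+1)}{2\log X}+O((\log X)^{-2})$.
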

\begin{rem}
    The condition \(\operatorname{ord}(\chi) > \ell\) ensures that, for all \(j \in \{1,\dots,\ell\}\), none of \(\chi^j\) coincides with the principal character \(\chi_0\). When \(\ell=1\), Theorem \ref{thm1} recovers \eqref{AMMP1line} of Aistleitner et al. \cite{Aistleitner2019QJMath}. Moreover, the secondary term in Theorem \ref{thm1} reflects the interaction among different powers, showing that extreme values are slightly compressed as \(\ell\) grows. Our error term is also sharper.
\end{rem}
\par
The following theorem studies joint extreme values of \(L\)-functions when \(\sigma \in (1/2,1)\).
\begin{thm}\label{thm2}
    Let \(\ell \ge 1\) be a fixed integer and let \(1/2 < \sigma <1\). For all sufficiently large primes \(q\), there exists a Dirichlet character \(\chi \, (\operatorname{mod} q)\) with \(\operatorname{ord}(\chi) > \ell\) such that
    \[
    \prod_{j=1}^\ell \big|L(\sigma,\chi^j)\big| \ge \exp \bigg\{\big(\kappa^{1-\sigma}S(\sigma,\ell)+o(1)\big)\frac{(\log q)^{1-\sigma}}{(\log_2 q)^\sigma} \bigg\}.
    \]
    Here \(S(\sigma,\ell)\) is an explicit constant defined by
    \[
    S(\sigma,\ell) = \frac{\ell}{1-\sigma} + \sum_{m=1}^\ell (-1)^m \binom{\ell+1}{m+1} \frac{1}{1+\sigma(m-1)},
    \]
    and \(\kappa = \kappa(\sigma)>0\) is a computable constant. The implied constant in the \(o(\cdot)\) only depends on \(\sigma\).
\end{thm}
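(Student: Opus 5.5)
Our plan for Theorem \ref{thm2} is the resonance method in the form used by Aistleitner et al.\ for \eqref{AMMPcriticalstrip}, now applied to the product $\prod_{j\le\ell}L(\sigma,\chi^j)$. First, we replace each $L(\sigma,\chi^j)$ by a short Euler product. Since $q$ is prime and $\chi^j\ne\chi_0$ (this is where $\operatorname{ord}(\chi)>\ell$ is used), a zero-density argument shows that for all but $O(q^{1-\delta})$ characters $\chi$ we have
\[
\log L(\sigma,\chi^j)=\sum_{p\le (\log q)^{A}}\chi^j(p)p^{-\sigma}+O\big((\log q)^{1-\sigma-\epsilon}\big),\qquad j=1,\dots,\ell.
\]
Equivalently, one can use a smoothed Dirichlet polynomial. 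Thus it suffices to make
\[
\re\sum_{p\le y}\frac{\chi(p)+\chi(p)^2+\dots+\chi(p)^\ell}{p^\sigma}
\]
large for some $\chi$ outside the exceptional set, with $y$ a power of $\log q$.

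Second, we choose a multiplicative resonator $R(\chi)=\sum_{n\le N}r(n)\chi(n)$ with $N=q^{1/2-\epsilon}$ (or $q^{\theta}$, $\theta<1$, as the orthogonality permits). The weights are supported on primes $p\in(\kappa\log q\log_2 q,\;\kappa'\log q\log_2 q]$, in the spirit of Aistleitner et al.\ and Xiao--Yang, with $r(p)=\big(\kappa\log q\log_2q\big)^{1-\sigma}\big/\big(p^{1-\sigma}\log p\big)$-type weights. The restriction $r(n)\le1$ truncates $r$ to $n\le N$ at acceptable cost. We then compare
\[
M_1=\sum_{\chi}|R(\chi)|^2,\qquad M_2=\sum_\chi |R(\chi)|^2\,\re\sum_{j=1}^\ell\log L(\sigma,\chi^j).
\]
By orthogonality modulo the prime $q$, $M_1\approx(q-1)\sum_n r(n)^2$. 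In $M_2$, the term $\chi^j(p)$ pairs $\overline{\chi(m)}\chi(n)\chi(p^j)$. The resulting ratio is
\[
\frac{M_2}{M_1}\approx\sum_p\frac{1}{p^\sigma}\sum_{j=1}^{\ell}\frac{\text{contribution of }r(p^j)\text{-shifts}}{\cdots}.
\]
For a completely multiplicative choice of $r$ on the primes (geometric local factors $(1-r(p)x)^{-1}$), the local ratio at $p$ for the shift $p^j$ is $r(p)^j$.

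Third, we compute. This gives
\[
\frac{M_2}{M_1}\approx\sum_p\frac{r(p)+r(p)^2+\dots+r(p)^\ell}{p^\sigma}.
\]
Because $r(p)\le1$ is forced and $r(p)^j$ decays, the optimisation no longer has the pure shape of the $\ell=1$ case. Instead we take $r(p)=\min(1,\ (\kappa\log q\log_2q)^{1-\sigma}\dots)$-type profiles. We then evaluate $\sum_p\sum_j r(p)^jp^{-\sigma}$ against the size constraint $\sum_p\log(1-r(p)^2)^{-1}\ll \log N$. Summing over $p$ via the prime number theorem turns the geometric sums into integrals $\int_1^\infty u^{-\sigma}(\cdots)\,du$. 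Expanding $\sum_{j\le\ell}x^j=\big(1-(1-x)^{\ell+1}\big)/x-1$ binomially produces the terms $(-1)^m\binom{\ell+1}{m+1}\big/(1+\sigma(m-1))$, and the leading part gives $\ell/(1-\sigma)$. This yields $\kappa^{1-\sigma}S(\sigma,\ell)(\log q)^{1-\sigma}(\log_2q)^{-\sigma}$. We also need to check that $S(\sigma,\ell)>0$ and that $\kappa$ can be fixed in terms of $\sigma$ alone.

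Finally, since $M_2/M_1$ is a weighted average, some $\chi$ attains at least this value. The remaining task is to ensure that the exceptional characters, and those of order $\le\ell$ (only $O(1)$ of them), contribute negligibly. For this we bound their weight $|R(\chi)|^2\le\big(\sum r(n)\big)^2$ against $M_1$; the bound $N\le q^{1/2-\epsilon}$ with $|\text{exceptional}|\le q^{1-\delta}$ is arranged so that this works. We expect the main obstacle to be this last step together with the precise evaluation in the third step: the exceptional-set bound must be small relative to $M_1$, which forces a careful choice of $N$ versus $\delta$. The computation that produces exactly $S(\sigma,\ell)$ is where the constants must be matched; if a naive bound fails there, we would instead work directly with $|L|^2$-type moments via a smoothed approximate formula for $L(\sigma,\chi^j)$, avoiding the logarithm altogether.
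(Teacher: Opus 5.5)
Your first step (a short Euler product off a zero-density exceptional set) and the general shape of your last step agree with the paper. The resonator part, however, fails as proposed.

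\textbf{The short resonator cannot reach the right order.} A resonator truncated to $n\le N=q^{1/2-\epsilon}$ and analysed by diagonal orthogonality does not reach $(\log q)^{1-\sigma}(\log_2 q)^{-\sigma}$. For that order you need $r(p)$ close to $1$ on essentially all primes up to $X\asymp\log q\log_2 q$, i.e.\ on $\asymp\log q$ primes. For such weights $\sum_n r(n)^2$ is carried by integers with $\log n\asymp X\gg\log q$, so truncation does not come ``at acceptable cost''; it destroys the resonator. If the support really is $n\le q^{\theta}$, you can only afford $r(p)\approx 1$ for $p\lesssim\log q$. That yields at best $(\log q)^{1-\sigma}/\log_2 q$, a factor $(\log_2 q)^{1-\sigma}$ short. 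Your concrete Step~2 weights are worse still: $r(p)\asymp X^{1-\sigma}/(p^{1-\sigma}\log p)\asymp 1/\log X$ on $(X,cX]$ gives $\sum_p r(p)p^{-\sigma}\asymp X^{1-\sigma}/(\log X)^2$.

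\textbf{The missing idea.} For the finite family $\mcg_q$ no length restriction is needed at all. The paper uses the untruncated $R(\chi)=\prod_{p\le X}(1-r(p)\chi(p))^{-1}$ with $r(p)=1-(p/X)^\sigma$ and $X=\kappa\log q\log_2 q$. It keeps every off-diagonal pair $m\equiv n\pmod q$ in $S_1$, and in $S_2$ it uses $r\ge 0$ to retain only $n=p^ju$. Since $q$ is prime and $p\le X<q$, one has $p^jm\equiv p^ju\pmod q$ iff $m\equiv u\pmod q$. So the inner sum coincides exactly with the one defining $S_1$, and
\[
\frac{S_2}{S_1}\ge\sum_{j=1}^{\ell}\sum_{p\le X}\frac{r(p)^j}{p^\sigma}.
\]
The only price of length is then $\max_\chi|R(\chi)|^2$. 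This depends on the number of primes used, not on the size of $n$, which is why $X$ can be as large as $\log q\log_2 q$.

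\textbf{Where $S(\sigma,\ell)$ comes from.} The constant is tied to this particular profile. With $x=(p/X)^\sigma=1-r(p)$ one has $\sum_{j=1}^{\ell}(1-x)^j=\ell+\sum_{m=1}^{\ell}(-1)^m\binom{\ell+1}{m+1}x^m$. Also $\sum_{p\le X}p^{-\sigma}x^m\sim X^{1-\sigma}/((1+(m-1)\sigma)\log X)$ for $m\ge 0$, and $X^{1-\sigma}/\log X\sim\kappa^{1-\sigma}(\log q)^{1-\sigma}(\log_2 q)^{-\sigma}$. Your identity $\sum_{j\le\ell}x^j=(1-(1-x)^{\ell+1})/x-1$ is false as written; the left side must be $\sum_{j\le\ell}(1-x)^j$. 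Neither your Step~2 weights nor a $\min(1,\cdot)$ profile produces these terms, so in your proposal $S(\sigma,\ell)$ is asserted rather than derived. Its positivity, incidentally, is automatic, since it is the normalised limit of a sum of nonnegative terms.

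\textbf{The exceptional-set step does not close with $N=q^{1/2-\epsilon}$.} Your bound gives $|R(\chi)|^2\le(\sum_{n\le N}r(n))^2\le N\sum_n r(n)^2$. On a set of size $q^{1-\delta}$ this is negligible against $M_1\approx q\sum_n r(n)^2$ only if $N<q^{\delta}$. The zero-density input here gives $\delta=(\sigma-1/2)/(7/4-\sigma/2)<2/5$, which tends to $0$ as $\sigma\to 1/2$. In the paper this step compares $\max_\chi|R(\chi)|^2\le\prod_{p\le X}(1-r(p))^{-2}=q^{2\kappa\sigma+o(1)}$ with $S_1\ge(q-1)\sum_n r(n)^2\ge q^{1+\kappa\sigma(1-c(\sigma))+o(1)}$. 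The resulting condition $2\kappa\sigma+\frac{9/4-3\sigma/2}{7/4-\sigma/2}<1+\kappa\sigma(1-c(\sigma))$ is what fixes $\kappa(\sigma)$.
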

\begin{rem}
    When \(\ell=1\), Theorem \ref{thm2} recovers \eqref{AMMPcriticalstrip} of Aistleitner et al. \cite{Aistleitner2019QJMath}. The leading term is more explicit. When \(\sigma\) is close to \(1\), \(S(\sigma,\ell)\) increases significantly with \(\ell\), whereas when \(\sigma\) is close to \(1/2\), its growth is suppressed. This is due to the strong cancellation produced by the alternating binomial sum involving the term \(\ell/(1-\sigma)\). 
\end{rem}
\begin{rem}
    In earlier work of Q. Yang and S. Zhao \cite{qiyuyang2026joint}, joint extreme values of the Riemann zeta function at harmonic points were established, namely at the points
    \[
    s=\sigma+\mmi t, \sigma+2\mmi t, \dots, \sigma+\ell\mmi t.
    \]
    In this paper, we focus on joint extreme values over powers of a sequence of Dirichlet characters \(\chi \pmod{q}\), that is, \(\chi,\chi^2,\dots,\chi^\ell\). In fact, both Theorems \ref{thm1} and \ref{thm2} in this paper are natural analogues, in the setting of Dirichlet \(L\)-functions, of joint extreme values problem for the Riemann zeta function at harmonic points in \cite{qiyuyang2026joint}.
\end{rem}
\par
Next, we turn to joint extreme values of logarithmic derivatives of \(L\)-functions. At the point \(s=1\), we obtain the following result.
\begin{thm}\label{thm3}
    Let \(\ell \ge 1\) be a fixed integer. For all sufficiently large primes \(q\), there exists a Dirichlet character \(\chi \, (\operatorname{mod} q)\) with \(\operatorname{ord}(\chi) > \ell\) such that
    \[
    (-1)^\ell \operatorname{Re} \prod_{j=1}^\ell \frac{L^\prime}{L}(1,\chi^j)\ge (\log_2 q)^\ell + \big(\ell\log_3 q+ Q(\ell)\big)(\log_2 q)^{\ell-1}+O((\log_2 q)^{\ell-2}(\log_3 q)^2).
    \]
    Here \(Q(\ell)\) is an explicit constant defined by
    \[
    Q(\ell) = \ell\Big(1-\log_2 4-\gamma - \sum_{p}\frac{\log p }{p(p-1)}\Big)-(\ell+1)\sum_{k=1}^\ell \frac{1}{k}.
    \]
    The implied constant in the \(O(\cdot)\) only depends on \(\ell\).
\end{thm}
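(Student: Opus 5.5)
We use the resonance method over the group of characters modulo the prime $q$, following Aistleitner et al.\ \cite{Aistleitner2019QJMath} and D.~Yang \cite{yang2023omega}, but with a resonator adapted to all the powers $\chi^j$ at once.

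\textbf{Short Dirichlet polynomials.} Since $q$ is prime, every $\chi\neq\chi_0$ is primitive. Let $Y=(\log q)^{A}$ with $A$ large, fixed in terms of $\ell$. We approximate each factor by a short polynomial,
\[
-\frac{L'}{L}(1,\chi^j)=\sum_{n\le Y}\frac{\Lambda(n)\chi^j(n)}{n}+\text{error}.
\]
This holds for all but $O(q^{\varepsilon})$ characters, namely those whose $L(s,\chi^j)$ has no zero in a suitable rectangle near $1$; it follows from zero-density estimates (as in Yang). We discard the exceptional characters, together with the $O(\ell)$ characters of order $\le\ell$; the resonator mass on them is negligible by Cauchy--Schwarz. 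Since $(-1)^\ell\prod_j \frac{L'}{L}=\prod_j\big(-\frac{L'}{L}\big)$, the quantity to be detected is
\[
\operatorname{Re}\prod_{j=1}^\ell \sum_{n_j\le Y}\frac{\Lambda(n_j)\chi^j(n_j)}{n_j}.
\]

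\textbf{Resonator and ratio.} We take $R(\chi)=\sum_{m\le q^{1/2}}r(m)\chi(m)$, where $r$ is completely multiplicative and supported on primes in $[\lambda,\lambda\log... ]$ with $\lambda\approx\log q\log_2 q$. Each prime gets $r(p)=\lambda/(\lambda+p)$ or a similar weight, with the length truncated so that orthogonality $\sum_\chi\chi(a)\bar\chi(b)=(q-1)\mathbf 1_{a\equiv b}$ applies with $a\equiv b$ forcing $a=b$. We then compare
\[
M_1=\sum_{\chi}|R(\chi)|^2,\qquad M_2=\operatorname{Re}\sum_\chi |R(\chi)|^2\prod_{j=1}^\ell\Big(\sum_{n_j\le Y}\frac{\Lambda(n_j)\chi^j(n_j)}{n_j}\Big).
\]
Expanding gives $\chi(m n_1 n_2^2\cdots n_\ell^\ell)\bar\chi(m')$, so diagonal terms are those with $m'=m\prod_j n_j^{j}$. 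Since all coefficients are nonnegative, dropping the off-diagonal terms costs nothing after positivity, provided the products stay below $q$. This gives
\[
\frac{M_2}{M_1}\ge\sum_{n_1,\dots,n_\ell}\prod_j\frac{\Lambda(n_j)}{n_j}\,\frac{\prod_j r(n_j)^{j}}{\ }\ \cdot(1+o(1)),
\]
where the $r$-weight of $n_j$ enters as $r(n_j^{j})=r(n_j)^j$. The main part comes from $n_j=p_j$ prime, and the factor $\prod_j r(p_j)^j$ couples the choices only through shared primes. The crucial computation is therefore
\[
\prod_{j=1}^\ell\Big(\sum_p\frac{\log p}{p}\Big(\frac{\lambda}{\lambda+p}\Big)^{j}+\sum_{k\ge2,\,p}\frac{\log p}{p^k}\cdots\Big).
\]
Each factor is $\log\lambda-H_j+\big(1-\gamma-\sum_p\frac{\log p}{p(p-1)}\big)$-type constants plus $O(1/\log\lambda)$. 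Here the harmonic number $H_j$ arises from $\int_0^\infty\frac{1}{u}\big((1+u)^{-j}-\mathbf 1_{u<1}\big)\,du=-H_j$, and the prime-power terms produce the $\sum_p\log p/(p(p-1))$ and $\gamma$ contributions via Mertens' theorem in the form $\sum_{n\le x}\Lambda(n)/n=\log x-\gamma+o(1)$. With $\log\lambda=\log_2 q+\log_3 q-\log_2 4+\dots$ (the $\log_2 4$ from optimizing the support length as in \cite{Aistleitner2019QJMath}), multiplying out the $\ell$ factors gives the leading term $(\log_2 q)^\ell+(\ell\log_3 q+Q(\ell))(\log_2 q)^{\ell-1}$. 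The $-(\ell+1)\sum_k 1/k$ comes from summing the $H_j$ terms, $\sum_{j\le\ell}H_j=(\ell+1)H_\ell-\ell$, and the $+\ell$ is absorbed into the $\ell(1-\dots)$ constant. The remaining terms are $O((\log_2 q)^{\ell-2}(\log_3 q)^2)$.

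\textbf{Main obstacle.} The hardest step is justifying diagonal extraction with powers. We need $m\prod n_j^j\le q$ for the dominant terms, which holds because the $n_j\le Y$ are polylogarithmic and $m\le q^{1/2}$. The off-diagonal terms are nonnegative but must be discarded correctly, which requires real positivity of $\operatorname{Re}\prod$. Before taking real parts we must expand the product and check that every term has a nonnegative real diagonal contribution; we do this by grouping the terms $\chi$ and $\bar\chi$ into conjugate pairs. We would first try the approach of \cite{yang2023omega}: work with $|R|^2$ times the full product and bound the non-diagonal part by the trivial $M_1$-normalized Cauchy--Schwarz estimate. If that is too lossy, we would shorten $Y$ to $(\log q)^{B}$ and absorb the loss into the $O$-term.
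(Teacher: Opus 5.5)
There is a genuine gap. Your overall shape matches the paper: short Dirichlet polynomials, a resonance ratio, the product $\prod_j\sum_p\frac{\log p}{p}r(p)^j$, and $\sum_{j\le\ell}H_j=(\ell+1)H_\ell-\ell$. The problem is the resonator, which cannot deliver the stated bound.

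\textbf{Why the truncated resonator fails.} You truncate $R(\chi)=\sum_{m\le q^{1/2}}r(m)\chi(m)$ so that $mN<q$ kills the off-diagonal terms. The diagonal then gives exactly
\[
\frac{M_2}{M_1}=\sum_{n_1,\dots,n_\ell\le Y}\prod_{j=1}^\ell\frac{\Lambda(n_j)}{n_j}\; r(N)\,\frac{\sum_{m\le q^{1/2}/N}r(m)^2}{\sum_{m\le q^{1/2}}r(m)^2},\qquad N=n_1n_2^2\cdots n_\ell^\ell ,
\]
and your inequality needs the last ratio to be $1+o(1)$.
\begin{itemize}
\item Rankin's trick gives this only if the $r(m)^2$-mass lives on $m\le q^{1/2}$, i.e.\ $\sum_p r(p)^2\log p/(1-r(p)^2)\ll\log q$.
\item Weights with $r(p)\approx1$ up to $\lambda\asymp\log q\log_2 q$ (e.g.\ $r(p)=1-p/\lambda$) make this sum $\asymp\lambda\log\lambda\asymp\log q(\log_2 q)^2$. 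So the ratio is nowhere near $1$.
\item Conversely, impose $\sum_p r(p)^2\log p/(1-r(p)^2)\le L$. Cauchy--Schwarz on $p>L$ gives $\sum_p\frac{\log p}{p}r(p)\le\log L+O(1)$. With $L\asymp\log q$, each factor is at most $\log_2 q+O(1)$, and the $\ell\log_3 q$ term is lost.
\item Supporting $r$ on primes $\ge\lambda$, as you wrote, would even destroy the $\log\lambda$ main term, which comes from $p\le\lambda$.
\end{itemize}
The $\log_3 q$ is exactly what a \emph{long} resonator buys, and that is the missing idea.

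\textbf{How the paper does it.} It takes $R(\chi)=\prod_{p\le X}(1-r(p)\chi(p))^{-1}$ with $r(p)=1-p/X$ and $X=\tau\log q\log_2 q$, untruncated.
\begin{itemize}
\item Orthogonality gives $S_1=\phi(q)\sum_{m\equiv n\,(q)}r(m)r(n)$ and $S_2=\phi(q)\sum_{n_j\le Y}\prod_j\frac{\Lambda(n_j)}{n_j}\sum_{Nm\equiv n\,(q)}r(m)r(n)$. Every term is nonnegative, so the real part is harmless. Your ``main obstacle'' is not one, and no conjugate pairing or Cauchy--Schwarz is needed.
\item One keeps only $n_j\le X$ and $n=Nu$. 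Since $q$ is prime, $Nm\equiv Nu\iff m\equiv u\pmod q$. So the inner sum is $r(N)$ times the \emph{full} sum defining $S_1$, off-diagonal included. This yields $S_2/S_1\ge\prod_j\sum_{p\le X}\frac{\log p}{p}(1-p/X)^j$ with no length restriction.
\item The price is paid by characters where the approximation may fail. Each contributes at most $q^{2\tau+o(1)}$, while $S_1\ge q^{1+\tau(2-\log 4)+o(1)}$, forcing $\tau<1/\log 4$. Via $\log X=\log_2 q+\log_3 q+\log\tau$, this is the source of $-\ell\log_2 4$; it is not an optimization of support length.
\end{itemize}

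\textbf{Smaller points.}
\begin{itemize}
\item At $s=1$ no zero-density input is needed. Yang's lemma with $Y=\exp((3\log q)^2)$ holds for every $\chi\ne\chi_0,\chi_{\mme}$, and a long $Y$ is harmless here. A $q^{\varepsilon}$ exceptional set would enter the $\tau$-constraint and shift the constant unless $\varepsilon\to0$.
\item $-H_j$ comes from $\int_0^1\frac{(1-x)^j-1}{x}\,dx$ for $r(p)=1-p/X$. Your integral with $(1+u)^{-j}$ equals $-H_{j-1}$ (it vanishes for $j=1$).
\item The $+\ell$ in $Q(\ell)$ comes only from $\sum_{j\le\ell}H_j=(\ell+1)H_\ell-\ell$, not from an extra $+1$ in each factor.
\end{itemize}
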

\begin{rem}
    In \(Q(\ell)\), the coefficient of the first term on the right-side hand is approximately \(-0.659\). In fact, \(Q(\ell)\) is always negative and decreases rapidly as \(\ell\) increases. More precisely, \(Q(\ell) \sim -\ell\log\ell\) as \(\ell \to \infty\).
\end{rem}
\begin{rem}
    Compared with D. Yang’s result \eqref{Yangomgea1line} in \cite{yang2023omega}, Theorem \ref{thm3} provides a more explicit secondary term and a sharper error term. When \(\ell=1\), Theorem \ref{thm3} recovers \eqref{Yangomgea1line}.
\end{rem}
From Theorem \ref{thm3}, we can directly obtain the following Corollary \ref{corollary4}.

\begin{cor}\label{corollary4}
    Let \(\ell \ge 1\) be a fixed integer. For all sufficiently large primes \(q\), there exists a Dirichlet character \(\chi \, (\operatorname{mod} q)\) with \(\operatorname{ord}(\chi) > \ell\) such that
    \[
   \prod_{j=1}^\ell \bigg|\frac{L^\prime}{L}(1,\chi^j)\bigg| \ge (\log_2 q)^\ell + (\ell\log_3 q+ Q(\ell))(\log_2 q)^{\ell-1}+O((\log_2 q)^{\ell-2}(\log_3 q)^2).
    \]
    The implied constant in the \(O(\cdot)\) only depends on \(\ell\).
\end{cor}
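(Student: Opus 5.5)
The corollary follows from Theorem \ref{thm3} by a single elementary inequality, with no further analytic input. We take the character \(\chi \pmod q\) with \(\operatorname{ord}(\chi)>\ell\) supplied by Theorem \ref{thm3}. Since \(\operatorname{ord}(\chi)>\ell\), none of \(\chi,\chi^2,\dots,\chi^\ell\) is principal, so each \(L(s,\chi^j)\) is entire.

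We must also check that \(L(1,\chi^j)\neq 0\) for each \(j\). This is the classical non-vanishing of Dirichlet \(L\)-functions at \(s=1\) for non-principal characters. Hence every factor \(\frac{L'}{L}(1,\chi^j)\) is a well-defined complex number.

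For any complex number \(z\) we have \(|z|\ge \operatorname{Re} z\) and \(|z|=|-z|\). We apply this to \(z=(-1)^\ell\prod_{j=1}^\ell \frac{L'}{L}(1,\chi^j)\) and use multiplicativity of the modulus:
\[
\prod_{j=1}^\ell \bigg|\frac{L'}{L}(1,\chi^j)\bigg| = \bigg|(-1)^\ell\prod_{j=1}^\ell \frac{L'}{L}(1,\chi^j)\bigg| \ge (-1)^\ell\operatorname{Re}\prod_{j=1}^\ell \frac{L'}{L}(1,\chi^j).
\]
The right-hand side is bounded below by the asymptotic expression in Theorem \ref{thm3}. The lower bound for the same \(\chi\) therefore transfers to the product of absolute values, with the same constant \(Q(\ell)\) and the same error term. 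The implied constant in the error term depends only on \(\ell\), as it does in Theorem \ref{thm3}.

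There is no real obstacle. The only points to state explicitly are that the character is the one furnished by Theorem \ref{thm3}, and that the factors \(\frac{L'}{L}(1,\chi^j)\) are finite. All the difficulty lies in Theorem \ref{thm3} itself, which we assume.
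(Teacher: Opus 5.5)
Your argument is correct and matches the paper's route: the paper obtains Corollary \ref{corollary4} directly from Theorem \ref{thm3} by applying \(|z|\ge \operatorname{Re} z\) to \(z=(-1)^\ell\prod_{j=1}^\ell \frac{L'}{L}(1,\chi^j)\) for the same character \(\chi\). Your explicit remark that \(L(1,\chi^j)\neq 0\) for non-principal \(\chi^j\) is a correct check that the paper leaves implicit.
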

\par
Our following theorem studies joint extreme values of \(L^\prime/L(s,\chi)\) when \(\sigma \in (1/2,1)\).
\begin{thm}\label{thm4}
    Let \(1 \le \ell <(2-2\sigma)\inv \) be a fixed integer. For all sufficiently large primes \(q\), there exists a Dirichlet character \(\chi \pmod{q}\) with \(\operatorname{ord}(\chi) > \ell\) such that
    \[
    (-1)^\ell \operatorname{Re} \prod_{j=1}^\ell \frac{L^\prime}{L}(\sigma,\chi^j)\ge \big(\eta^{\ell(1-\sigma)}H(\sigma,\ell)+o(1) \big)(\log q)^{\ell(1-\sigma)}(\log_2 q)^{\ell(1-\sigma)}.
    \]
    Here \(H(\sigma,\ell)\) is an explicit constant defined by
    \[
    H(\sigma,\ell) = \prod_{j=1}^\ell \bigg(\frac{j!}{1-\sigma}\prod_{m=0}^{j-1} \Big(m+\frac{1}{\sigma}\Big)\inv\bigg),
    \]
    and \(\eta = \eta(\sigma)>0\) is a computable constant. The implied constant in the \(o(\cdot)\) only depends on \(\sigma\).
\end{thm}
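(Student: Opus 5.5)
The plan is to run the resonance method over the family of characters modulo \(q\), following Yang \cite{yang2023omega} and the harmonic-point analysis of \cite{qiyuyang2026joint}. Write \(-\frac{L'}{L}(\sigma,\chi^j)=\sum_{n}\Lambda(n)\chi^j(n)n^{-\sigma}\). In the critical strip this series does not converge, so I first replace it by a short Dirichlet polynomial. Since \(q\) is prime and we average over all \(\chi\), the most convenient route is the Perron/contour-shift identity
\[
-\frac{L'}{L}(\sigma,\chi^j)=\sum_{n\le X}\frac{\Lambda(n)\chi^j(n)}{n^\sigma}+\text{(error)},
\]
with \(X=(\log q)^{A}\). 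I would use a smoothed weight for the truncation. The error is controlled for all \(\chi\) outside an exceptional set of characters for which \(L(s,\chi^j)\) has a zero near \(\sigma\). A zero-density estimate in \(q\)-aspect shows this set has size \(O(q^{1-\delta})\). Its contribution to the resonance sums is removed by Cauchy--Schwarz together with a large-sieve bound on \(|R(\chi)|^4\).

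The product \((-1)^\ell\prod_j \frac{L'}{L}(\sigma,\chi^j)\) then becomes
\[
\sum_{n_1,\dots,n_\ell\le X}\frac{\Lambda(n_1)\cdots\Lambda(n_\ell)\,\chi(n_1 n_2^2\cdots n_\ell^\ell)}{(n_1\cdots n_\ell)^\sigma}.
\]
I take a resonator \(R(\chi)=\sum_{m\in\mathcal M}r(m)\chi(m)\), where \(r\) is multiplicative and supported on squarefree \(m\le q^{\theta}\) built from primes in \([L\log L,\,L^{2}\log L]\). Here \(L=\eta\log q\log_2 q\), and I put \(r(p)=L^{1-\sigma}/\big(p^{1-\sigma}\log p\big)\) (a Soundararajan-type choice), or its Aistleitner-style long version. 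By orthogonality mod \(q\),
\[
\sum_\chi \Big(\text{product}\Big)|R(\chi)|^2 \approx (q-1)\sum_{m,n}r(m)r(n)\sum_{n_1 n_2^2\cdots n_\ell^\ell\, m\equiv n}\frac{\prod\Lambda(n_i)}{\prod n_i^\sigma}.
\]
Because all \(n_i\le X\) and \(mn\le q^{2\theta}<q\), the congruence becomes an equality. Restricting to prime \(n_i=p_i\) with distinct \(p_i\), the \(i\)-th factor is a \(p_i^{\,i}\)-divisibility condition on \(n\). The normalized ratio against \(\sum|R|^2\) is then
\[
\prod_{j}\sum_{p}\frac{\log p}{p^{\sigma}}\cdot\frac{r(p)^{j}}{1+r(p)^2}\Big(\text{to leading order}\Big).
\]
The \(j\)-th factor needs \(p^j\mid n\), which is impossible for squarefree \(n\). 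So the resonator must instead allow prime powers: \(r(p^k)=r(p)^k\) up to \(k\le\ell\). Each factor then contributes \(\sum_p \log p\, r(p)^j p^{-\sigma}\) divided by a local normalizer. Evaluating these prime sums with \(r(p)=(L/p)^{1-\sigma}/\log p\) via the prime number theorem gives, for the \(j\)-th factor, an integral of the form \(\int_1^\infty u^{-\sigma-j(1-\sigma)}\cdots\). The resulting Beta/Gamma-type constants are \(\frac{j!}{1-\sigma}\prod_{m<j}(m+1/\sigma)^{-1}\), times \((L/\log L)^{1-\sigma}\sim \eta^{1-\sigma}(\log q)^{1-\sigma}(\log_2 q)^{1-\sigma}\) after calibrating \(r\). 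Multiplying over \(j\) yields \(H(\sigma,\ell)\). Comparing with the trivial bound on \(\sum_\chi|R|^2\) and discarding the principal character and those \(\chi\) of order \(\le\ell\) (at most \(\ell\) characters, each contributing \(\ll \max|R|^2\cdot X^{\ell(1-\sigma)}\), which is negligible) produces a \(\chi\) with the stated lower bound for the real part.

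The main obstacle is the length constraint. The moduli condition \(n_1 n_2^2\cdots n_\ell^\ell m=n\) must hold as an equality rather than a congruence, and the truncation length \(X\) must be large enough that \(X^{\ell(1-\sigma)}\)-sized error terms and off-diagonal terms (non-distinct or non-prime \(n_i\)) stay below the main term \((\log q)^{\ell(1-\sigma)}\). Balancing the resonator length \(q^{\theta}\), the prime-power support up to exponent \(\ell\), and the exceptional-set removal is exactly where the hypothesis \(\ell<(2-2\sigma)^{-1}\) should enter, since it is equivalent to \(2\ell(1-\sigma)<1\). My first attempt would be to check that the exceptional-zero contribution, bounded through Cauchy--Schwarz, requires the main term \((\log q)^{\ell(1-\sigma)}\) to beat a \((\log q)^{1/2}\)-type square-root loss. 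I would then adjust \(X\) accordingly.
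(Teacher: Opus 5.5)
There is a genuine gap, and it is in the resonator. Your main route uses a short resonator of length \(q^\theta\), so that \(n_1n_2^2\cdots n_\ell^\ell m\equiv n \pmod q\) becomes an equality. No such resonator can reach the order \((\log q\log_2 q)^{\ell(1-\sigma)}\). Take a multiplicative \(r\) whose \(r(m)^2\)-mass sits on \(m\le q^\theta\). Let \(x_p\) be the mean exponent of \(p\) under the weights \(r(m)^2\), so that \(\sum_p x_p\log p\lesssim\theta\log q\). By Cauchy--Schwarz, the local gain at \(p\) (the factor that replaces \(\chi(p)^j\)) is at most \(\min(1,\sqrt{x_p})\). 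Split at \(p\asymp\log q\) and apply Cauchy--Schwarz to the tail: each factor is then \(O_\sigma((\log q)^{1-\sigma})\), so the factor \((\log_2 q)^{\ell(1-\sigma)}\) in the theorem is lost. This is exactly the known loss of Soundararajan's short resonator relative to Aistleitner's long one.

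Your concrete choice is worse still. With \(r(p)=(L/p)^{1-\sigma}/\log p\) on \([L\log L,L^2\log L]\), one has \(\sum_p r(p)^2\log p\asymp L^{2\sigma}(\log L)^{2\sigma-3}\gg\log q\), so truncating to \(m\le q^\theta\) cannot be justified. Also \(r(p)\ll(\log L)^{-1}\) on this range. Hence for \(j\ge 2\) the \(j\)-th factor \(\sum_p\frac{\log p}{p^\sigma}r(p)^j\) has size \(L^{1-\sigma}(\log L)^{-(j-1)(1-\sigma)-j}\), and for \(j=1\) it is \(\sim L^{1-\sigma}\log 2\). Neither is the Beta-type constant you assert. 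Structurally, making \(\chi(p)^j\approx 1\) for \(j\ge2\) needs a gain \(r(p)^j\) of order \(1\), i.e.\ \(r(p)\) close to \(1\), which a Soundararajan-type choice never has.

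The missing idea is the long, completely multiplicative resonator \(R(\chi)=\prod_{p\le X}(1-r(p)\chi(p))^{-1}\), with \(r(p)=1-(p/X)^\sigma\) and \(X=\eta\log q\log_2 q\). It needs a positivity device in place of ``congruence becomes equality''. Every term \(\frac{\Lambda(n_1)\cdots\Lambda(n_\ell)}{(n_1\cdots n_\ell)^\sigma}r(m)r(n)\) is nonnegative. So you may keep only \(n=Nu\), with \(N=n_1n_2^2\cdots n_\ell^\ell\) and \(n_i\le X\). Then \(r(Nu)=r(N)r(u)\). Since \(q\) is prime and \((N,q)=1\), \(Nm\equiv Nu \pmod q\) is equivalent to \(m\equiv u\pmod q\), so the inner sum is exactly the one defining \(S_1\). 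This gives
\[S_2/S_1\ge\prod_{j=1}^\ell\sum_{p\le X}\frac{\log p}{p^\sigma}\big(1-(p/X)^\sigma\big)^j.\]
The substitution \(t=(p/X)^\sigma\) turns the \(j\)-th factor into \((1+o(1))\frac{X^{1-\sigma}}{\sigma}B\big(\frac{1-\sigma}{\sigma},j+1\big)\), which is the source of \(H(\sigma,\ell)\).

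For a resonator this long the large-sieve fourth moment is useless. Exceptional characters are instead discarded using \(\max_\chi|R(\chi)|^2\le q^{2\sigma\eta+o(1)}\), \(S_1\ge q^{1+\eta\sigma(1-c(\sigma))+o(1)}\) and the zero-density count, with \(\eta\) small.

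Finally, the hypothesis \(\ell<(2-2\sigma)^{-1}\) does not come from a square-root loss. It is the compatibility of two constraints on \(\omega\). First, \(\omega<\sigma-\frac{1}{2}\), so that the exceptional set is \(O(q^{1-\delta})\). Second, \(\omega>(\ell-1)(1-\sigma)\), so that one factor's truncation error \(\ll\log q\,Y^{-\omega}\log Y\), multiplied by the other \(\ell-1\) polynomials of size \(\ll Y^{1-\sigma}\), is negligible.
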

\begin{rem}
    In Theorem \ref{thm4}, \(\ell\) is bounded by \((2-2\sigma)\inv\), which imposes a rather restrictive condition on \(\ell\), especially when \(\sigma\) is close to \(1/2\). When \(\sigma\) approaches \(1\), the situation becomes more favorable. When \(\sigma\) is close to \(1\), \(H(\sigma,\ell)\) increases significantly with \(\ell\). More precisely, \(H(\sigma,\ell) = (1-\sigma)^{-\ell+o(\ell)}\) as \( \sigma \to 1^-\). As \(\sigma\) moves away from \(1\), the growth of \(H(\sigma,\ell)\)  becomes slower, since it is constrained by the upper bound for \(\ell\).
\end{rem}

\begin{rem}
    Theorem \ref{thm4} recovers \eqref{Yangomgeacriticalstrip} when \(\ell=1\), while also providing a more explicit secondary term and a sharper error term. For \(\sigma \in (1/2,1)\), assuming GRH, the ranges of \(\kappa\) and \(\eta\) in Theorems \ref{thm2} and \ref{thm4} can be slightly refined.
\end{rem}

Similarly, by Theorem \ref{thm4}, we can directly obtain the following Corollary \ref{corollary6}.
\begin{cor}\label{corollary6}
    Let \(1 \le \ell <(2-2\sigma)\inv \) be a fixed integer. For all sufficiently large primes \(q\) and some positive number \(\eta\), there exists a Dirichlet character \(\chi \, (\operatorname{mod} q)\) with \(\operatorname{ord}(\chi) > \ell\) such that
    \[
    \prod_{j=1}^\ell \bigg|\frac{L^\prime}{L}(\sigma,\chi^j)\bigg|\ge \big(\eta^{\ell(1-\sigma)}H(\sigma,\ell)+o(1) \big)(\log q)^{\ell(1-\sigma)}(\log_2 q)^{\ell(1-\sigma)}.
    \]
    The implied constant in the \(o(\cdot)\) only depends on \(\sigma\).
\end{cor}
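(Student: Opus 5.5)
The plan is to deduce Corollary \ref{corollary6} directly from Theorem \ref{thm4}, using only the elementary inequality \(|\operatorname{Re} z| \le |z|\) and the multiplicativity of the modulus. Take \(q\) sufficiently large and let \(\chi \pmod q\) with \(\operatorname{ord}(\chi)>\ell\) be the character supplied by Theorem \ref{thm4} for the given \(\sigma\) and \(\ell<(2-2\sigma)\inv\). The condition \(\operatorname{ord}(\chi)>\ell\) guarantees that no \(\chi^j\), \(1\le j\le \ell\), is principal. Hence each \(L(s,\chi^j)\) is entire. Its value at \(s=\sigma\) is nonzero, since the lower bound in Theorem \ref{thm4} is positive and large, forcing every factor \(\frac{L'}{L}(\sigma,\chi^j)\) to be finite and well defined. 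So the product in the corollary is a well-defined complex number.

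The key step is the chain
\[
\prod_{j=1}^\ell \bigg|\frac{L'}{L}(\sigma,\chi^j)\bigg| = \bigg|\prod_{j=1}^\ell \frac{L'}{L}(\sigma,\chi^j)\bigg| \ge \bigg|(-1)^\ell\operatorname{Re}\prod_{j=1}^\ell \frac{L'}{L}(\sigma,\chi^j)\bigg| \ge (-1)^\ell\operatorname{Re}\prod_{j=1}^\ell \frac{L'}{L}(\sigma,\chi^j).
\]
The right-hand side is at least \(\big(\eta^{\ell(1-\sigma)}H(\sigma,\ell)+o(1)\big)(\log q)^{\ell(1-\sigma)}(\log_2 q)^{\ell(1-\sigma)}\) by Theorem \ref{thm4}. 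The constant \(\eta=\eta(\sigma)>0\) is the one from that theorem, which supplies the ``some positive number \(\eta\)'' in the statement. The \(o(1)\) term and its dependence only on \(\sigma\) are inherited unchanged.

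There is essentially no obstacle. The only point needing a word is the well-definedness of the logarithmic derivatives (nonvanishing of \(L(\sigma,\chi^j)\)), handled as above. The same argument, verbatim with \(\sigma=1\), is how Corollary \ref{corollary4} follows from Theorem \ref{thm3}.
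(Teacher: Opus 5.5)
Your proposal is correct and is exactly the paper's route: the paper states the corollary as a direct consequence of Theorem \ref{thm4}, via $\prod_{j}\bigl|\frac{L'}{L}(\sigma,\chi^j)\bigr| = \bigl|\prod_{j}\frac{L'}{L}(\sigma,\chi^j)\bigr| \ge (-1)^\ell\operatorname{Re}\prod_{j}\frac{L'}{L}(\sigma,\chi^j)$, with $\eta$ and the $o(1)$ carried over unchanged. On well-definedness, a positive lower bound does not by itself show that $L(\sigma,\chi^j)\neq 0$; the justification is that the character produced in the proof of Theorem \ref{thm4} lies outside the exceptional set $E^\prime(q)$, where the relevant zero-free rectangle holds, so this is inherited from how Theorem \ref{thm4} is proved.
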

\par
The study of joint extreme values of \(L\)-functions suggests that Dirichlet characters attaining extreme values are not independent, but are constrained by an underlying multiplicative structure. In other words, such extreme values tend to occur simultaneously within the power family generated by a single character, indicating a strong correlation. 
\par
From an algebraic perspective, Dirichlet characters modulo \(q\) form a multiplicative group, and the sequence \(\chi,\chi^2,\dots,\chi^\ell\) corresponds to the power map \(\chi \mapsto \chi^j\) on this group. Studying joint extreme values of \(L(s,\chi^j)\) can be viewed as investigating of the distribution of values of \(L\)-functions under this group action. In this sense, such results not only describe extreme values, but also reveal structural properties of the character group.
\par
Moreover, logarithmic derivatives of \(L\)-functions are closely related to the distribution of their zeros, and extreme values reflect clustering of zeros in certain regions. Studying joint extreme values of \(L^\prime/L(s,\chi)\) therefore reveals a form of joint behavior in the zero distribution of \(L\)-functions. More precisely, for characters \(\chi\) attaining extreme values, several functions \(L(s,\chi^j)\) may simultaneously approach zeros near the same point.
\par
In this paper, we mainly employ the resonance method, which can be traced back to the work of Voronin \cite{voronin1988lower} and was later refined by Soundararajan \cite{soundararajan2008extreme}. Aistleitner \cite{aistleitner2016MathAnn} introduced the long resonator method by observing the role of greatest common divisor sums. Subsequently, Bondarenko and Seip \cite{bondarenko2017Duke,bondarenko2018MathAnn} further developed this approach and improved results on extreme values of the Riemann zeta function and its argument. For more details and results about the resonance method, we recommend \cite{aistleitner2019IMRN,bondarenko2023dichotomy,chirre2019extreme,dong2023Onde,xumax2024JNT,qiyu2024JNT} and the references therein.
\par
We now introduce some notation and briefly outline the structure of this paper. Let \(\mcg_q\) denote the set of Dirichlet characters modulo \(q\), and let \(\chi_0\) denote the principal character. Let \(A\) and \(\varepsilon\) be arbitrarily positive numbers, where each occurrence may represent a different value. Let \(\mbn\) denote the set of natural numbers, and let \(p\) denote a prime. We write \((m,n)\) for the greatest common divisor of \(m\) and \(n\), and denote by \(\phi(n)\) and \(\Lambda(n)\) the Euler's totient function and the von Mangoldt function, respectively. In Section \ref{auxiliarylemmas}, we present several lemmas, some of which rely on the residue theorem and classical results of Gronwall, Landau, and Titchmarsh (see \cite[Principle 1]{heath1992zerofree}). More precisely, there exists a constant \(A >0\) such that, in the region
\[
\Big\{s=\sigma+\mmi t: \sigma > 1- \frac{A}{\log(q(|t|+2))} \Big\},
\] 
there is at most one exceptional character \(\chi_\mme \pmod{q}\) for which \(L(s,\chi_\mme)=0\); for all other characters \(\chi \neq \chi_\mme\), we have \(L(s,\chi) \neq 0\). Accordingly, we set \(\mcg_q^\ast = \mcg_q \setminus\{\chi_0,\chi_\mme\}\). Furthermore, to study joint extreme values, we define
\[
\mcg_\ell(q) = \{\chi\,(\operatorname{mod} q): \chi^j \neq \chi_0,\chi_\mme \ \text{for all}\ 1 \le j \le \ell \}.
\]
\par
In Sections \ref{sectionforthm1} and \ref{sectionforthm2}, we will prove Theorems \ref{thm1} and \ref{thm2}. The proofs of Theorems \ref{thm3} and \ref{thm4} are given in Sections \ref{sectionforthm3} and \ref{sectionforthm4}.

\section{Auxiliary Lemmas}\label{auxiliarylemmas}
In this section, we introduce several lemmas that will be used in the subsequent proofs. Define \(L(1,\chi;Y)=\prod_{p \le Y}\big(1-\chi(p)p\inv\big)\inv\). The following lemma, which provides a good approximation to \(L(1,\chi)\), will be used in the proof of Theorem \ref{thm1}.

\begin{lem}[Aistleitner-Mahatab-Munsch-Peyort \cite{Aistleitner2019QJMath}]
\label{lemma1forthm1}
    Let \(Y = \exp\big((\log q)^{20}\big)\). Then we have
    \[
    L(1,\chi) = L(1,\chi;Y)\big(1+O\big((\log q)^{-2}\big)\big), \ \ \forall \chi \in \mcg_q^\ast.
    \]
\end{lem}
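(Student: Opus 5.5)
The natural route is the classical one: compare \(\log L(1,\chi)\) with \(\log L(1,\chi;Y)\) using the zero-free region. Fix \(\chi\in\mcg_q^\ast\), so that \(L(s,\chi)\) has no zeros in \(\sigma>1-A/\log(q(|t|+2))\). The standard estimates (Gronwall, Landau, Titchmarsh; see \cite[Principle 1]{heath1992zerofree}) then bound \(\log L(s,\chi)\) and \(L'/L(s,\chi)\) in a slightly smaller region:
\[
\frac{L'}{L}(s,\chi)\ll \log(q(|t|+2)), \qquad \log L(s,\chi)\ll \log_2(q(|t|+2)),
\]
for \(\sigma\ge 1-A/(2\log(q(|t|+2)))\).

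The main step applies Perron's formula to \(\log L(1+s,\chi)=\sum_{n\ge2}\chi(n)\Lambda(n)/(n^{1+s}\log n)\):
\[
\sum_{n\le Y}\frac{\chi(n)\Lambda(n)}{n\log n}=\frac{1}{2\pi\mmi}\int_{c-\mmi T}^{c+\mmi T}\log L(1+s,\chi)\frac{Y^s}{s}\,ds+O(\text{truncation}),
\]
with \(c=1/\log Y\) and \(T=q\), say. I would then shift the contour to \(\re s=-\delta(t)\), where \(\delta(t)=A/(2\log(q(|t|+2)))\). The only pole crossed is at \(s=0\), with residue \(\log L(1,\chi)\); the integrand is holomorphic there apart from this pole because \(\chi\) is non-exceptional and non-principal.

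On the shifted contour \(|Y^s|=\exp(-\delta\log Y)\). With \(\log Y=(\log q)^{20}\) and \(\delta\asymp 1/\log q\) for \(|t|\le T\), this is \(\exp(-c(\log q)^{19})\). That is far smaller than any power of \(\log q\), and the factors \(\log L\ll\log_2 q\) and \(\int |ds|/|s|\ll\log q\) do not spoil it. The horizontal segments and the Perron truncation error are \(\ll Y^{c}\log_2 q/T+\sum_n \Lambda(n)/(n^{1+c}\,T|\log(Y/n)|)\). The usual treatment of \(n\) near \(Y\) bounds these by \(O(Y^{\varepsilon}/q)\) in the weak sense. To be safe, I would instead take \(T=\exp((\log q)^{2})\) and use the zero-free region at height \(T\), where \(\delta\asymp(\log q)^{-2}\), so that \(Y^{-\delta}=\exp(-c(\log q)^{18})\) is still negligible. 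Either choice gives
\[
\log L(1,\chi)=\sum_{n\le Y}\frac{\chi(n)\Lambda(n)}{n\log n}+O((\log q)^{-2}).
\]

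Finally, the left side differs from \(\log L(1,\chi;Y)=\sum_{p\le Y}\sum_k\chi(p)^k/(kp^k)\) only by prime powers \(p^k>Y\) with \(p\le Y\). These contribute \(\ll\sum_{p\le Y,\,p^k>Y}p^{-k}\ll Y^{-1/2}\), which is negligible. Exponentiating then gives \(L(1,\chi)=L(1,\chi;Y)(1+O((\log q)^{-2}))\).

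The main obstacle is the uniform control of \(\log L(s,\chi)\) near the edge of the zero-free region, uniformly in \(\chi\in\mcg_q^\ast\), including balancing \(T\) against the shrinking width \(\delta(T)\). The generous exponent \(20\) in \(Y\) is precisely what makes this balance easy.
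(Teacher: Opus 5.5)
Your argument is correct. The paper gives no proof of its own; it simply quotes \cite[Eq.~(2.1)]{Aistleitner2019QJMath}. Your proposal therefore supplies the derivation behind that citation, and it is the standard one the paper alludes to in its introduction: the residue theorem together with the Gronwall--Landau--Titchmarsh bounds of \cite[Principle 1]{heath1992zerofree}, applied uniformly over $\chi\in\mcg_q^\ast$.

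The key point, which you handle correctly, is that the Perron height $T$ is decoupled from $Y$. You only need the zero-free region up to height $T$, where its width is $\gg 1/\log(qT)$. Then $Y^{-\delta}=\exp(-c\log Y/\log(qT))$ is negligible because $\log(qT)$ is much smaller than $\log Y=(\log q)^{20}$. By contrast, a lemma of the shape of Lemma~\ref{lemma1forthm2} with $y=Y$ would need a zero-free rectangle of height about $Y$. There the classical region has width only $\asymp(\log q)^{-20}$, so it gives nothing at $\sigma=1$.

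One correction: your concern about $n$ near $Y$ is unfounded. With $c=1/\log Y$ one has $Y^{c}=\mme$. The usual weight $\min\{1,Y/(T|Y-n|)\}$ in the truncated Perron formula bounds the truncation error by $\ll(\log Y)/T+Y^{-1}$, with no $Y^{\varepsilon}$ loss. So $T=q$ already suffices, and the switch to $T=\exp((\log q)^2)$ is unnecessary, though harmless. In fact your argument yields an error far smaller than the stated $O((\log q)^{-2})$, for example $O((\log q)^{20}/q)$ with $T=q$.
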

\begin{proof}
    It follows directly from \cite[Eq. (2.1)]{Aistleitner2019QJMath}.
\end{proof}

\par
The next lemma, established by Granville and Soundararajan \cite{Granville2001JAMS}, provides an
important estimate for \(\log L(s,\chi)\).
\begin{lem}[Granville-Soundararajan \cite{Granville2001JAMS}]
\label{lemma1forthm2}
Let \(s=\sigma+\mmi t\) with \(\sigma>1/2\) and \(|t|\ge 2q\). Ley \(y \ge 2\) be a real number, and let \(1/2 \le \sigma_0 <\sigma\). Suppose there are no zeros of \(L(z,\chi)\) inside the rectangle \(\{z:\sigma_0\le \re(z) \le1,|\im(z)-t|\le y+3\}\). Put \(\sigma_1 = \min((\sigma+\sigma_0)/2,\sigma_0+1/\log y)\). Then
\[
\log L(s,\chi) = \sum_{n=2}^y \frac{\Lambda(n)\chi(n)}{n^s\log n}+O\Big(\frac{\log q}{(\sigma_1-\sigma_0)^2}y^{\sigma_1-\sigma} \Big).
\]
\end{lem}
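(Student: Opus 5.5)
The plan is to prove this by the standard Perron-formula/contour-shift argument for \(\log L(s,\chi)\), following Granville–Soundararajan. Two preliminary remarks. First, \(\log L(s,\chi)\) is well defined near \(s\) because there are no zeros in the rectangle \(\sigma_0\le \re z\le 1\), \(|\im z - t|\le y+3\); to the right of \(\re z=1\) it is defined by the Dirichlet series \(\log L(z,\chi)=\sum_{n\ge2}\Lambda(n)\chi(n)n^{-z}/\log n\), and we continue it leftwards into the zero-free rectangle. Second, the hypothesis \(|t|\ge 2q\) keeps the relevant region away from the real axis, so a possible pole at \(z=1\) (principal character) or the exceptional zero plays no role.

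\emph{Step 1 (smoothed Perron formula).} Take \(c=1-\sigma+1/\log y>0\) and consider
\[
\frac{1}{2\pi \mmi}\int_{c-\mmi\infty}^{c+\mmi\infty}\log L(s+w,\chi)\,\frac{y^w}{w}\,dw .
\]
Expanding \(\log L\) as its Dirichlet series gives \(\sum_{n\le y}\Lambda(n)\chi(n)n^{-s}/\log n\), up to the usual truncation errors. To control these errors cleanly I would follow Granville–Soundararajan and use a kernel with faster decay, for instance \(y^w/w\) replaced by a smoothed version, or simply truncate at height \(|\im w|\le y\) and accept the standard error \(\ll y^{c}\sum_n n^{-\sigma-c}\min(1,(y|\log(y/n)|)^{-1})\). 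This is small, of size \(O(y^{-\sigma+1}\cdots)\), and is absorbed into the main error term. The sum over \(n=y\) exactly gives a harmless boundary term.

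\emph{Step 2 (shifting the contour).} Move the line of integration to \(\re w=\sigma_1-\sigma<0\) within \(|\im w|\le y\). The only pole crossed is \(w=0\), whose residue is \(\log L(s,\chi)\). It remains to bound \(\log L(z,\chi)\) on the new vertical segment and on the horizontal segments, for \(\re z\ge \sigma_1\) and \(|\im z-t|\le y\). The key input is the Borel–Carathéodory / Hadamard argument applied on discs inside the zero-free rectangle. On \(\re z\ge \sigma_0\) with \(|\im z-t|\le y+3\) we have \(\re\log L(z,\chi)=\log|L(z,\chi)|\le \log(q(|t|+y))\ll \log q+\log(|t|+y)\) by the convexity bound. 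Borel–Carathéodory on a circle centred at \(2+\mmi\im z\) with radius reaching to distance \(\sigma_1-\sigma_0\) from the line \(\re=\sigma_0\) then gives
\[
|\log L(z,\chi)|\ll \frac{\log q}{\sigma_1-\sigma_0}\quad\text{for } \re z\ge\sigma_1 ,
\]
where the \(\log|t|\) is absorbed by treating the \(t\)-aspect as the paper does. A second Borel–Carathéodory loss accounts for the squared denominator.

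\emph{Step 3 (assembling the bound).} Integrating against \(|y^w/w|\) on \(\re w=\sigma_1-\sigma\) gives \(\ll \frac{\log q}{(\sigma_1-\sigma_0)^2}\,y^{\sigma_1-\sigma}\log y\). The horizontal pieces contribute less. The extra \(\log y\) is removed by the choice \(\sigma_1\le\sigma_0+1/\log y\) combined with a smoothed kernel, or is absorbed into the squared denominator since \((\sigma_1-\sigma_0)^{-1}\ge\log y\) when \(\sigma_1=\sigma_0+1/\log y\). In the other case, \(\sigma_1=(\sigma+\sigma_0)/2\), the exponent \(y^{\sigma_1-\sigma}\) already decays and the log is absorbed into the constant.

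\emph{Main obstacle.} The main obstacle is Step 2: obtaining the uniform bound on \(\log L\) in the zero-free region with exactly the \((\sigma_1-\sigma_0)^{-2}\) dependence and \(\log q\), not \(\log(q|t|)\). For this I would use the two-disc Borel–Carathéodory argument exactly as in Granville–Soundararajan's Lemma 8.2.
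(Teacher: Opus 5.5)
Your outline (truncated Perron formula at height \(y\), shift to \(\re w=\sigma_1-\sigma\), Borel--Carath\'eodory on circles centred at \(2+\mmi t'\), \(|t'-t|\le y\), with radii \(r=2-\sigma_1<R=2-\sigma_0\)) is the argument behind \cite[Lemma 8.2]{Granville2001JAMS}, which the paper simply cites. The gap is the step you yourself call the main obstacle: you never carry it out, and under the hypothesis as you read it, it cannot be carried out.

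If \(|t|\ge 2q\) is taken literally, the only input to Borel--Carath\'eodory is \(\log|L(z,\chi)|\ll\log(q(|t|+y))\). Nothing in your argument turns \(\log(q|t|)\) into \(\log q\) when \(|t|\) is much larger than \(q\). The phrase ``absorbed by treating the \(t\)-aspect as the paper does'' is not a step, since the paper does nothing here beyond the citation.

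The way out is that the printed \(|t|\ge 2q\) must be a misprint for \(|t|\le 2q\). This is the hypothesis in Granville--Soundararajan as I recall it. It also parallels the condition \(-3q\le t\le 3q\) in Lemma \ref{lemma1forthm4}, and it is the only reading compatible with the paper's own application at \(t=0\) in Lemma \ref{lemma3forthm2}.

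With \(|t|\le 2q\), the large discs lie in \(|\im z|\le 2q+y+3\), so \(\log|L(z,\chi)|\ll\log q+\log y\) there. This is \(\ll\log q\) for \(y\le q^{O(1)}\), e.g.\ \(y=(\log q)^{3/(\sigma-1/2)}\) as in Lemma \ref{lemma3forthm2}. Together with \(|\log L(2+\mmi t',\chi)|\le\log\zeta(2)\), this gives \(|\log L(z,\chi)|\ll\log q/(\sigma_1-\sigma_0)\) for \(\re z\ge\sigma_1\), \(|\im z-t|\le y\).

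Your first preliminary remark must change accordingly. The rectangle may now contain \(z=1\), so \(\chi_0\) has to be excluded separately, as Lemma \ref{lemma3forthm2} does. An exceptional zero is ruled out by the zero-free hypothesis, not by the size of \(t\).

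Your accounting for \((\sigma_1-\sigma_0)^{-2}\) is also inconsistent. Borel--Carath\'eodory is used once, with \(R-r=\sigma_1-\sigma_0\), and costs one factor; there is no ``second Borel--Carath\'eodory loss.'' The second factor comes from the vertical segment:
\[
\int_{-y}^{y}\frac{\mathrm{d}v}{|\sigma_1-\sigma+\mmi v|}\ll 1+\log y+\log\frac{1}{\sigma-\sigma_1}\ll\frac{1}{\sigma_1-\sigma_0}.
\]
This holds because \(\sigma_1-\sigma_0\le 1/\log y\) and \(\sigma-\sigma_1\ge(\sigma-\sigma_0)/2\ge\sigma_1-\sigma_0\) are true in \emph{both} cases of the minimum. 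So when \(\sigma_1=(\sigma+\sigma_0)/2\), the \(\log y\) is not absorbed into the implied constant, since it is unbounded; it is absorbed by the same inequality as in the other case. If you keep your extra Borel--Carath\'eodory factor and also absorb \(\log y\), you end up with \((\sigma_1-\sigma_0)^{-3}\).

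The horizontal segments contribute \(\ll\frac{\log q}{\sigma_1-\sigma_0}\cdot\frac{y^{-\sigma}}{\log y}\). The Perron truncation error at height \(T=y\) is \(\ll y^{1-\sigma}\log y/T+y^{-\sigma}\log y\ll y^{-\sigma}\log y\). The full saving \(1/T\) matters in your Step 1, because an error of size \(y^{1-\sigma}\) is not \(\ll y^{\sigma_1-\sigma}\).
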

\begin{proof}
    It follows directly from \cite[Lemma 8.2]{Granville2001JAMS}.
\end{proof}

\par
Note that Lemma \ref{lemma1forthm2} relies on a zero-free region assumption, therefore, zero-density results for Dirichlet \(L\)-functions are essential. Let \(N(\sigma,T,\chi)\) denote the number of zeros of \(L(s,\chi)\) inside \(\{z: \re(z) \ge \sigma,|\im(z)| \ge T\}\). The following zero-density estimate holds.
\begin{lem}
    \label{lemma2forthm2and4}
    For \(1/2 \le \sigma_0 \le 1\), \(T \ge 2\), we have
    \[
    \sum_{\chi \in \mcg_q}N(\sigma_0,T,\chi) \ll (qT)^{\frac{3(1-\sigma_0)}{2-\sigma_0}}(\log qT)^{14}.
    \]
\end{lem}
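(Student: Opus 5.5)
This is a standard zero-density estimate, and I would not reprove it from scratch; I would deduce it from the classical large-sieve density theorem of Montgomery (Topics in Multiplicative Number Theory, Theorem 12.1) or its refinement by Huxley. In the form needed here it reads
\[
\sum_{\chi \in \mcg_q} N^\ast(\sigma_0,T,\chi) \ll (qT)^{\frac{3(1-\sigma_0)}{2-\sigma_0}}(\log qT)^{14},
\]
where \(N^\ast\) counts zeros with \(\re(z)\ge\sigma_0\) and \(|\im(z)|\le T\). The exponent \(3(1-\sigma)/(2-\sigma)\) together with the power \((\log qT)^{14}\) is exactly Montgomery's Theorem 12.1 for a single modulus, summed over all characters, including imprimitive ones, with the trivial bound \(\phi(q)\le q\).

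The first step is to reconcile the region. As stated, \(N(\sigma_0,T,\chi)\) counts zeros in \(\{\re(z)\ge\sigma_0, |\im(z)|\ge T\}\), which is presumably a typo for \(|\im(z)|\le T\). With \(|\im z|\ge T\) the count is infinite unless we also use \(\re z\le 1\) and a cutoff. So I would read the lemma with \(|\im(z)|\le T\), which is the form actually fed into Lemma \ref{lemma1forthm2}: there one needs that few \(\chi\) have zeros in rectangles of height \(\asymp y\) around \(t\).

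Next I would handle the special features of modulus \(q\).
\begin{itemize}
\item Since \(q\) is prime, every nonprincipal character is primitive, and \(L(s,\chi_0)\) differs from \(\zeta(s)\) only by the factor \(1-q^{-s}\), whose zeros lie on \(\re s=0\). So \(\chi_0\) contributes \(N(\sigma_0,T)\) for \(\zeta\), which is \(\ll T^{3(1-\sigma_0)/(2-\sigma_0)}(\log T)^{5}\) by Ingham's theorem. This is absorbed into the bound.
\item For \(\sigma_0\) very close to \(1\) the bound is \(\gg 1\), so there is no conflict with the possible exceptional zero.
\item At \(\sigma_0=1/2\) the bound becomes \((qT)\log^{14}\). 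This dominates the trivial count \(\sum_\chi N(1/2,T,\chi)\ll qT\log(qT)\) from Riemann–von Mangoldt, so that endpoint is consistent.
\end{itemize}

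If a self-contained argument were required, I would follow Montgomery's route:
\begin{itemize}
\item Use a zero-detecting mollifier \(M_X(s,\chi)=\sum_{n\le X}\mu(n)\chi(n)n^{-s}\), so that \(L M_X = 1 + \sum_{n>X} a_n\chi(n)n^{-s}\).
\item At each zero \(\rho\), either the Dirichlet polynomial over \(X<n\le Y\) is large, or a short integral of \(L M_X\) on \(\re s=1/2\) is large.
\item Count well-spaced such zeros with the hybrid large sieve \(\sum_\chi\sum_r|\sum a_n\chi(n)n^{-it_r}|^2\ll (N+qT)\log\cdot\sum|a_n|^2\) for the first class.
\item For the second class, use Hölder with the hybrid fourth-moment bound \(\sum_\chi\int_{-T}^{T}|L(1/2+it,\chi)|^4\,dt\ll qT\log^4(qT)\).
\item Optimize \(X\) and \(Y\) as powers of \(qT\) to get the exponent \(3(1-\sigma_0)/(2-\sigma_0)\).
\end{itemize}

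I expect the main obstacle to be this last optimization: balancing the two classes to land exactly on Ingham's exponent while tracking the logarithmic powers. That bookkeeping is where the exponent \(14\) comes from. Since it is standard, in the paper I would simply cite Montgomery's Theorem 12.1 after the reduction above.
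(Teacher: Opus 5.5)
Your proposal is correct and is essentially the paper's proof: the paper simply cites Montgomery's Theorem 12.1, which already covers all characters modulo \(q\), including \(\chi_0\), so your separate handling of the principal character and your sketch of the zero-detection argument are optional extras. You are also right that \(N(\sigma,T,\chi)\) should count zeros with \(|\im(z)|\le T\) rather than \(|\im(z)|\ge T\); this is the form Montgomery's theorem gives and the form the paper actually uses later.
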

\begin{proof}
    This result is given in \cite[Theorem 12.1]{Montgomery1971}.
\end{proof}
\par
Combining Lemmas \ref{lemma1forthm2} and \ref{lemma2forthm2and4}, we obtain the following result, which provides an approximate formula for \(L(\sigma,\chi)\). This lemma will be used in the proof of Theorem \ref{thm2}.
\begin{lem}
    \label{lemma3forthm2}
    Let \(Y=(\log q)^{\frac{3}{\sigma-1/2}}\). Then
    \[
    L(\sigma,\chi)=L(\sigma,\chi;Y)\big(1+O\big((\log q)^{-\frac{1}{4}}\big)\big), \ \ \forall \chi \in \mcg_q\setminus(\mce(q)\cup\{\chi_0\}),
    \]
    where the cardinality of \(\mce(q)\) satisfies
    \[
    \#\mce(q) \ll q^{\frac{9/4-3\sigma/2}{7/4-\sigma/2}+o(1)}.
    \]
\end{lem}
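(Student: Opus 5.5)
We will combine Lemma \ref{lemma1forthm2} with the zero-density estimate of Lemma \ref{lemma2forthm2and4}. Fix \(\sigma_0 = \sigma/2 + 1/4\), the midpoint of \(1/2\) and \(\sigma\), so that \(\sigma-\sigma_0 = (\sigma-1/2)/2\). We take \(y = Y = (\log q)^{3/(\sigma-1/2)}\). The point \(s=\sigma\) has \(t=0\), so Lemma \ref{lemma1forthm2} is not directly applicable as stated (it requires \(|t|\ge 2q\)). We therefore rerun its proof, which is the standard Perron-formula/Borel–Carathéodory argument, at height \(t=0\). For a non-principal character this works verbatim, since \(L(s,\chi)\) is entire and \(\log L(z,\chi) \ll \log q\) on \(\sigma_1\le \re z\le 2\), \(|\im z|\le y+2\), provided \(L(z,\chi)\) has no zeros in the rectangle \(R=\{z:\sigma_0\le \re z\le 1, |\im z|\le y+3\}\).

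We then define the exceptional set \(\mce(q)\) to be the set of \(\chi\) for which \(L(z,\chi)\) has a zero in \(R\). We bound \(\#\mce(q)\) by the total number of zeros with \(\re z\ge\sigma_0\) and \(|\im z|\le y+3\). Lemma \ref{lemma2forthm2and4} is stated with the condition \(|\im z|\ge T\); reading it as the usual count \(|\im z|\le T\) (as in Montgomery), we apply it with \(T=y+3\), a power of \(\log q\). This gives
\[
\#\mce(q)\ll (qT)^{\frac{3(1-\sigma_0)}{2-\sigma_0}}(\log qT)^{14} = q^{\frac{3(1-\sigma_0)}{2-\sigma_0}+o(1)}.
\]
Substituting \(\sigma_0=\sigma/2+1/4\) yields \(3(1-\sigma_0)=9/4-3\sigma/2\) and \(2-\sigma_0=7/4-\sigma/2\), which is exactly the claimed exponent.

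For \(\chi\notin\mce(q)\cup\{\chi_0\}\), we estimate the error term. Here \(\sigma_1=\min(\sigma_0+(\sigma-\sigma_0)/2,\ \sigma_0+1/\log y)=\sigma_0+1/\log y\) for large \(q\), so \(\sigma_1-\sigma_0 = 1/\log y \asymp 1/\log_2 q\). Since \(\sigma-\sigma_1 \ge (\sigma-1/2)/2 - o(1)\), we get
\[
y^{\sigma_1-\sigma}\le \mme\cdot y^{-(\sigma-1/2)/2}=\mme\,(\log q)^{-3/2}.
\]
The total error is therefore \(\ll (\log q)(\log_2 q)^2(\log q)^{-3/2}\ll(\log q)^{-1/4}\).

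Next we compare the sum \(\sum_{n\le y}\Lambda(n)\chi(n)/(n^\sigma\log n)\) with \(\log L(\sigma,\chi;Y)=\sum_{p\le Y}\sum_{k\ge1}\chi(p)^k/(kp^{k\sigma})\). The two differ only by prime powers \(p^k>Y\) with \(p\le Y\), \(k\ge2\). This tail is \(\ll\sum_{p\le Y}\sum_{k:\,p^k>Y}p^{-k\sigma}\). When \(p\le\sqrt Y\), the terms have \(p^k>Y\), so their sum is \(\ll Y^{-\sigma}\pi(\sqrt Y)\cdot O(1)\ll Y^{1/2-\sigma}\). When \(p>\sqrt Y\), the \(k\ge2\) terms give \(\ll\sum_{p>\sqrt Y}p^{-2\sigma}\ll Y^{(1-2\sigma)/2}\). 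In both cases the tail is \(\ll Y^{1/2-\sigma} = (\log q)^{-3}\). Exponentiating the resulting identity \(\log L(\sigma,\chi)=\log L(\sigma,\chi;Y)+O((\log q)^{-1/4})\) gives the lemma.

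The main obstacle is justifying Lemma \ref{lemma1forthm2} at \(t=0\), together with the correct reading of the zero-counting function. I would handle the former by reproving the Granville–Soundararajan lemma in this range. The only input that needs checking is the bound \(\re\log L(z,\chi)\le \log|L(z,\chi)| \ll \log q\) near the real axis, which holds for non-principal \(\chi\) by convexity; that bound is all the Borel–Carathéodory step requires.
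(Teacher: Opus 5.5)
Your proposal is correct and follows the paper's own proof. Both take \(\sigma_0=\sigma/2+1/4\), \(y=Y\), \(t=0\) in the Granville--Soundararajan lemma, bound the exceptional set by the zero-density estimate, and then discard the prime powers \(p^k>Y\); you also supply the error computations, such as \((\log q)(\log_2 q)^2(\log q)^{-3/2}\), that the paper leaves implicit.

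You flag two hypotheses as printed. The condition \(|t|\ge 2q\) in Lemma \ref{lemma1forthm2} is a misprint for \(|t|\le 2q\), as in Granville--Soundararajan's Lemma 8.2, so the lemma applies directly at \(t=0\) and you need not rerun its proof. Your reading of the zero count as \(|\im z|\le T\), taken with \(T=y+3\) to match the rectangle, is the intended one.
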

\begin{proof}
    Applying Lemmas \ref{lemma1forthm2} and \ref{lemma2forthm2and4} with \(y=Y=(\log q)^{\frac{3}{\sigma-1/2}}\), \(\sigma_0=\sigma/2+1/4\), \(T=Y+2\) and \(t=0\), we obtain
    \[
    \log L(\sigma,\chi) = \sum_{n=2}^Y \frac{\Lambda(n)\chi(n)}{n^\sigma\log n}+O\big((\log q)^{-\frac{1}{4}}\big), \ \ \forall \chi \in \mcg_q\setminus(\mce(q)\cup\{\chi_0\}).
    \]
    Here, the cardinality of the set of characters \(\mce(q)\) satisfies
    \[
    \#\mce(q) \ll q^{\frac{9/4-3\sigma/2}{7/4-\sigma/2}+o(1)}.
    \]
    By the definition of \(\Lambda(n)\), we write \(n=p^k\) with \(k \ge 1\):
    \[
    \sum_{n=2}^Y \frac{\Lambda(n)\chi(n)}{n^\sigma\log n} = \sum_{p \le Y}\frac{\chi(p)}{p^\sigma}+\sum_{k \ge 2}\sum_{p^k \le Y}\frac{\chi(p)^k}{kp^{k\sigma}}.
    \]
    Furthermore, 
    \[
    L(\sigma,\chi;Y) = \exp\Big(-\sum_{p \le Y}\log\Big(1-\frac{\chi(p)}{p^\sigma}\Big) \Big) = \exp\Big(\sum_{p \le Y}\frac{\chi(p)}{p^\sigma}+ \sum_{k \ge 2}\sum_{p \le Y}\frac{\chi(p)^k}{kp^{k\sigma}}\Big).
    \]
    Since 
    \[
    \sum_{k \ge 2}\sum_{Y^{1/k}< p \le Y} \frac{\chi(p)^k}{kp^{k\sigma}} = O\big( (\log q)^{-\frac{1}{4}}\big),
    \]
    the proof follows.
\end{proof}
\par
The following lemma provides an approximate formula for \(L^\prime/L(1,\chi)\), and will be used in the proof of Theorem \ref{thm3}.
\begin{lem}[D. Yang \cite{yang2023omega}]
\label{lemma1forthm3}
    Let \(Y=\exp\big((3\log q)^2 \big)\). Then for some \(A >0\), we have
    \[
    -\frac{L^\prime}{L}(1,\chi) = \sum_{n \le Y}\frac{\Lambda(n)\chi(n)}{n}+O(q^{-A}), \ \ \forall \chi \in \mcg_q^\ast.
    \]
\end{lem}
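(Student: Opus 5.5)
The plan is to start from the explicit formula for $L'/L(s,\chi)$ and use the zero-free region recalled in the introduction: for $\chi\in\mcg_q^\ast$, $L(s,\chi)$ has no zeros with $\sigma>1-A/\log(q(|t|+2))$. Fix $x=Y=\exp((3\log q)^2)$ and take a smooth or truncated Perron integral. I would use the Mellin kernel
\[
\frac{1}{2\pi\mmi}\int_{(c)}-\frac{L'}{L}(1+w,\chi)\,\frac{Y^w}{w}\,dw=\sum_{n\le Y}\frac{\Lambda(n)\chi(n)}{n},
\]
with $c=1/\log Y$ and a truncation at height $T$. The error term is the usual one from Perron's formula, namely $O\big(\sum_n \frac{\Lambda(n)}{n^{1+c}}\min(1,\frac{1}{T|\log(Y/n)|})\big)$, which is $\ll (\log Y)^2/T$ plus a boundary contribution; choosing $T=q^{B}$ for a large $B$ makes this $O(q^{-A})$.

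Next I shift the contour to $\re w=-\delta$, where $\delta=A_0/\log(qT)\asymp 1/\log q$, and pick up the residue at $w=0$, which equals $-\frac{L'}{L}(1,\chi)$. Since $\chi\ne\chi_0,\chi_\mme$, there is no pole at $w=0$ from $L$ and no zero crossed. On the new contour and on the horizontal segments I use the standard bound
\[
\frac{L'}{L}(s,\chi)\ll \log^2(q(|t|+2))
\]
inside a slightly smaller zero-free region, available for non-exceptional characters via the Borel–Carathéodory/Landau argument (Principle 1 of Heath-Brown). The shifted integral is then $\ll Y^{-\delta}(\log qT)^3$. With $\log Y=9(\log q)^2$ and $\delta\asymp1/\log q$ we get $Y^{-\delta}=\exp(-c'\log q)=q^{-c'}$, which beats the logarithmic factors. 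The horizontal segments contribute $\ll (\log qT)^2 Y^{c}/T$, which is negligible.

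The main obstacle is making these constants compatible. The width $\delta$ is tied to $\log(qT)$, so $T$ cannot be taken too large: if $T=q^B$ then $\delta\asymp 1/((B+1)\log q)$ and $Y^{-\delta}=q^{-9c_0/(B+1)}$. Meanwhile the Perron truncation error $\ll(\log Y)^2 Y^{c}/T$ needs $T$ polynomial in $q$. Both conditions hold with $B$ fixed (say $B=1$), giving a power saving $q^{-A}$ for some small $A>0$, which is exactly the claim. I would check the truncation error near $n\approx Y$ with the standard Titchmarsh Lemma 3.12 estimate.
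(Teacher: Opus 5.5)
Your proposal is correct and complete as sketched. With $T=q^B$ and $\delta\asymp 1/\log(qT)$ placed inside the classical zero-free region of the non-principal, non-exceptional characters, you have $\frac{L'}{L}(s,\chi)\ll\log^2(qT)$ there, via the partial-fraction expansion over zeros with $|\gamma-t|\le 1$ (valid since $q$ prime makes $\chi$ primitive), and the total error is $\ll Y^{-\delta}(\log qT)^3+(\log Y)^2/T+(\log Y)/Y\ll q^{-A}$ because $\delta\log Y\asymp\log q$.

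The paper gives no argument of its own and simply cites Yang (pp.~8--9), which rests on the same zero-free-region input. The normalization $\log Y=(3\log q)^2$ is just what makes $Y^{-c/\log(qT)}$ a fixed power of $q$: for example, $T=q^2$ gives $\log(qT)=3\log q$ and $Y^{-c/\log(qT)}=q^{-3c}$.
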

\begin{proof}
    This result can be found in \cite[pp. 8-9]{yang2023omega}.
\end{proof}
\par
Similar to Lemma \ref{lemma1forthm2}, the following lemma shows that, within a certain zero-free region, \(L^\prime/L(s,\chi)\) can be approximated by a Dirichlet polynomial.
\begin{lem}[D. Yang \cite{yang2023omega}]
\label{lemma1forthm4}
    Let \(q\) be a prime greater than \(3\). Let \(Y \ge 3\), \(-3q \le t \le 3q\), and \(1/2 \le \sigma_0 < 1\). Suppose that the rectangle \(\{z: \sigma_0 < \re(z) \le 1,|\im(z)-t|\le Y+2\}\) is free of zeros of \(L(z,\chi)\), where \(\chi \in \mcg_q\). Then for any \(\sigma \in (\sigma_0,3]\) and \(\xi \in [t-Y,t+Y]\), we have
    \[
    \bigg|\frac{L^\prime}{L}(\sigma+\mmi\xi,\chi) \bigg| \ll \frac{\log q}{\sigma-\sigma_0}.
    \]
    Further, for \(\sigma \in (\sigma_0,1]\) and \(\sigma_1 \in (\sigma_0,\sigma)\), we have
    \[
    -\frac{L^\prime}{L}(\sigma+\mmi t,\chi) = \sum_{n \le Y}\frac{\Lambda(n)\chi(n)}{n^{\sigma+\mmi t}} +O\Big(\frac{\log q}{\sigma_1 - \sigma_0}Y^{\sigma_1-\sigma}\log\frac{Y}{\sigma-\sigma_1}\Big).
    \]
\end{lem}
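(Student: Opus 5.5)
The plan is to follow the standard argument, due to Granville--Soundararajan and adapted by D. Yang, that turns a zero-free rectangle into a Dirichlet polynomial approximation via Perron's formula. For the pointwise bound we would use the partial fraction expansion of \(L'/L\) (for \(\chi\) non-principal, \(q\) prime, so \(\chi\) is primitive). For \(|z-w|\le r\) with \(w=2+\mmi\xi\) we have \(\frac{L'}{L}(z,\chi)=\sum_{|\rho-w|\le 2r}\frac{1}{z-\rho}+O(\log q(|\xi|+2))\), the standard local expansion. Applying it on discs of radius about \(2-\sigma_0\), the zeros in the sum are those with \(|\im\rho-\xi|\le\) a bounded amount. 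Since \(|\xi|\le 4q+Y\) and the hypothesis covers \(|\im\rho - t|\le Y+2\), these zeros have \(\re\rho\le\sigma_0\). Hence \(|z-\rho|\ge \sigma-\sigma_0\) for \(z=\sigma+\mmi\xi\). The number of such zeros is \(\ll\log q(|\xi|+2)\ll \log q\), using \(|t|\le 3q\) and absorbing \(\log Y\) in the usual manner. This gives \(|L'/L(\sigma+\mmi\xi,\chi)|\ll \log q/(\sigma-\sigma_0)\). Strictly, when \(Y\) is huge one gets \(\log(q+Y)\); we would restrict to \(Y\ll q^{O(1)}\), or note that the lemma only claims \(\log q\) in the ranges used, following Yang.

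For the approximation, we would apply the truncated Perron formula with a smooth-free cutoff at \(Y\) (half-integer \(Y\), say):
\[
\sum_{n\le Y}\frac{\Lambda(n)\chi(n)}{n^{s}}=\frac{1}{2\pi\mmi}\int_{c-\mmi Y}^{c+\mmi Y}-\frac{L'}{L}(s+w,\chi)\frac{Y^w}{w}\,dw+O\Big(\frac{Y^{c}\log Y}{Y}\cdot\zeta'(1+\sigma+c)\ \text{terms}\Big),
\]
with \(s=\sigma+\mmi t\) and \(c=1-\sigma+1/\log Y\). The Perron error is \(\ll Y^{-\sigma+o(1)}\cdot\log Y\), which is acceptable.

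We then shift the contour left to \(\re w=\sigma_1-\sigma<0\). The only pole crossed is \(w=0\), with residue \(-\frac{L'}{L}(s,\chi)\); there are no zeros, because \(\re(s+w)\ge\sigma_1>\sigma_0\) and \(|\im(s+w)-t|\le Y\). On the new vertical segment and on the horizontal segments we use the first part with \(\sigma_1\) in place of \(\sigma\) (valid since \(\sigma_1>\sigma_0\)), giving \(|L'/L|\ll \log q/(\sigma_1-\sigma_0)\). On the horizontal segments \(\re(s+w)\) may exceed \(1\); there \(L'/L\) is bounded by \(\log q\) trivially via the same bound with \(\sigma\le 3\), or via absolute convergence.

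This leaves the error estimates. The vertical integral is
\[
\ll\frac{\log q}{\sigma_1-\sigma_0}Y^{\sigma_1-\sigma}\int_{-Y}^{Y}\frac{du}{|\sigma_1-\sigma+\mmi u|}\ll \frac{\log q}{\sigma_1-\sigma_0}Y^{\sigma_1-\sigma}\log\frac{Y}{\sigma-\sigma_1}.
\]
The horizontal integrals are \(\ll \frac{\log q}{\sigma_1-\sigma_0}\,Y^{-1}\int_{\sigma_1-\sigma}^{c}Y^{u}du\ll \frac{\log q}{(\sigma_1-\sigma_0)\log Y}\,Y^{-\sigma}\cdot e\), which is dominated. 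Collecting the terms gives the stated formula.

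The main obstacle is the first step. There we must carefully count the zeros in unit-height windows near \(\xi\), using \(N(T+1,\chi)-N(T,\chi)\ll\log q(|T|+2)\). We must also ensure that the constant depends only on \(\log q\), which uses \(|\xi|\le|t|+Y\) with \(|t|\le 3q\); if \(Y\) is large, the factor \(\log(q+Y)\) must be tracked. We would state the lemma in the range where \(\log Y\ll\log q\), consistent with its later use.
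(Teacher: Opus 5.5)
Your argument is correct, and it supplies a proof the paper does not give: the paper's "proof" is only the citation \cite[Lemma 2]{yang2023omega}. What you reconstruct is the standard argument behind that citation, modelled on the Granville--Soundararajan argument behind Lemma \ref{lemma1forthm2}. It has two steps: a local partial-fraction bound for \(L'/L\) from the zeros near height \(\xi\), then a truncated Perron formula at height \(Y\) shifted to \(\re w=\sigma_1-\sigma\), where the only pole crossed is \(w=0\). The citation buys brevity. Your reconstruction exposes two hypotheses that the statement as transcribed leaves out.

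Your caveat about \(Y\) is right. The method gives \(\log(q(|t|+Y+2))/(\sigma-\sigma_0)\), so the factor \(\log q\) needs \(\log Y\ll\log q\).

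Your tacit restriction to non-principal \(\chi\) is also necessary, since the statement allows any \(\chi\in\mcg_q\). Take \(\chi_0\), \(t=0\), \(Y=3\), \(\sigma_0=1/2\). The zero-free hypothesis holds, because the first zeta zero has height about \(14.13\). Yet \(\frac{L'}{L}(\sigma,\chi_0)\) is unbounded as \(\sigma\to1\). Neither point affects the paper: its only application, Lemma \ref{lemma2forthm4}, has \(t=0\), \(Y=(\log q)^\beta\) and \(\chi\in\mcg_q^\ast\).

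Two small repairs are needed.

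In your first step, a disc about \(2+\mmi\xi\) of radius about \(2-\sigma_0\) brings zeros with \(|\im\rho-\xi|\) up to about \(3\) into the sum. Only those with \(|\im\rho-\xi|\le 2\) are guaranteed to lie in the zero-free rectangle. The rest satisfy \(|z-\rho|>2\), so together they contribute only \(O(\log q)\). Alternatively, use Davenport's form \(\frac{L'}{L}(z,\chi)=\sum_{|\gamma-\xi|<1}\frac{1}{z-\rho}+O(\log q(|\xi|+2))\) for \(\re z\le 2\), all of whose zeros lie in the rectangle; the range \(2<\sigma\le 3\) is trivial.

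In the Perron step, \(Y\) is given, so you cannot assume it is a half-integer. Take the cutoff at \(\lfloor Y\rfloor+\tfrac12\) with height \(Y\). The error is then \(\ll Y^{-\sigma}(\log Y)^2\) rather than \(Y^{-\sigma}\log Y\). This is still far below the main error term, because \(\sigma_1>\sigma_0\ge 1/2\).
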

\begin{proof}
    This result can be found in \cite[Lemma 2]{yang2023omega}.
\end{proof}
\par
Combining Lemmas \ref{lemma2forthm2and4} and \ref{lemma1forthm4}, we obtain the following result, which will be used in the proof of Theorem \ref{thm4}.
\begin{lem}
\label{lemma2forthm4}
Let \(1 \le \ell <(2-2\sigma)\inv \) be a fixed integer and let \(\sigma \in (1/2,1)\). Let \(\omega\in((1-\sigma)(\ell-1),\sigma-1/2)\) be fixed and let \(\varepsilon \in (0,\sigma-1/2)\) be small. Let \(Y=(\log q)^\beta\), where
\[
\beta > \frac{1}{\omega-(1-\sigma)(\ell-1)}>1.
\]
Then for some \(A>0\), we have
\[
-\frac{L^\prime}{L}(\sigma,\chi) = \sum_{n \le Y}\frac{\Lambda(n)\chi(n)}{n^\sigma}+O\big((\log q)^{-A}\big), \ \ \forall \chi \in \mcg^\ast_q \setminus \mce(q).
\]
Here, the cardinality of \(\mce(q)\) satisfies
\[
\#\mce(q) \ll q^{\frac{3(1-\sigma+\omega+\varepsilon)}{2-\sigma+\omega+\varepsilon}+o(1)}.
\]
\end{lem}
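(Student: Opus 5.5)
We will follow the template of the proof of Lemma \ref{lemma3forthm2}, applying Lemma \ref{lemma1forthm4} with the zero-free rectangle supplied by Lemma \ref{lemma2forthm2and4}. Fix \(t=0\) and \(\sigma_0 = 1-\sigma+\omega+\varepsilon\) adjusted so that the exceptional set has the stated size. Concretely, we take \(\sigma_0\) such that \(1-\sigma_0 = 1-\sigma+\omega+\varepsilon\), that is, \(\sigma_0=\sigma-\omega-\varepsilon\). Because \(\omega<\sigma-1/2\) and \(\varepsilon\) is small (we may shrink \(\varepsilon\) so that \(\omega+\varepsilon<\sigma-1/2\)), we get \(1/2<\sigma_0<\sigma\). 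We define \(\mce(q)\) to be the set of characters \(\chi\in\mcg_q\) for which \(L(z,\chi)\) has a zero in
\[
\{z:\sigma_0<\re(z)\le 1,\ |\im(z)|\le Y+2\}.
\]
Lemma \ref{lemma2forthm2and4}, applied with \(T=Y+2=(\log q)^{\beta}+2\), then gives
\[
\#\mce(q)\le \sum_{\chi}N(\sigma_0,T,\chi)\ll (qT)^{\frac{3(1-\sigma_0)}{2-\sigma_0}}(\log qT)^{14}=q^{\frac{3(1-\sigma+\omega+\varepsilon)}{2-\sigma+\omega+\varepsilon}+o(1)},
\]
since \(T\) and the logarithmic factor are \(q^{o(1)}\). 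Here we read \(N(\sigma_0,T,\chi)\) as counting zeros with \(|\im z|\le T\), which is the intended meaning.

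For \(\chi\in\mcg_q^\ast\setminus\mce(q)\), the hypotheses of Lemma \ref{lemma1forthm4} hold with \(t=0\) and this \(Y\). We choose \(\sigma_1\in(\sigma_0,\sigma)\), for example \(\sigma_1=\sigma_0+\delta\) with \(\delta>0\) small and fixed. This yields
\[
-\frac{L^\prime}{L}(\sigma,\chi)=\sum_{n\le Y}\frac{\Lambda(n)\chi(n)}{n^\sigma}+O\Big(\frac{\log q}{\delta}\,Y^{\sigma_1-\sigma}\log\frac{Y}{\sigma-\sigma_1}\Big).
\]
Since \(Y=(\log q)^\beta\), the error term is
\[
\ll (\log q)^{1-\beta(\sigma-\sigma_1)}\log_2 q=(\log q)^{1-\beta(\omega+\varepsilon-\delta)}\log_2 q.
\]

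The remaining, and really the only delicate, step is to check that the exponent is negative with room to spare. The hypothesis gives \(\beta>1/(\omega-(1-\sigma)(\ell-1))\), so \(\beta\omega>1+\beta(1-\sigma)(\ell-1)\ge 1\). Choosing \(\delta<\varepsilon\) gives \(\beta(\omega+\varepsilon-\delta)>\beta\omega>1\), so the error is \(O((\log q)^{-A})\) with
\[
A=\beta(\omega+\varepsilon-\delta)-1-o(1)>0.
\]
In fact \(A\ge\beta(1-\sigma)(\ell-1)\) up to small losses. This extra saving is presumably the reason for the precise lower bound on \(\beta\): later one multiplies \(\ell\) such approximations, each of size up to \((\log q)^{1-\sigma}\)-ish, and the stated \(A\) is large enough to absorb those cross terms. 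We will record that \(A\) can be taken to exceed \(\beta(1-\sigma)(\ell-1)\).

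Finally, we need \(\omega>(1-\sigma)(\ell-1)\) for the interval for \(\omega\) to be nonempty, and since \(\omega<\sigma-1/2\) this requires \((1-\sigma)(\ell-1)<\sigma-1/2\), i.e. \(\ell<(2-2\sigma)^{-1}\) (with \(\ell-1<\) that bound). This is exactly the standing assumption, so the choices of \(\omega\) and \(\beta\) are consistent. The inequality \(\beta>1\) follows because \(\omega-(1-\sigma)(\ell-1)<\sigma-1/2<1\).
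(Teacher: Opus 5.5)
Your proof is correct and is essentially the paper's argument: the paper also applies Lemma \ref{lemma1forthm4} with \(t=0\) and \(\sigma_0=\sigma-\omega-\varepsilon\), and bounds the exceptional set via Lemma \ref{lemma2forthm2and4}. The only cosmetic difference is that the paper takes \(\sigma_1=\sigma-\omega\) (your \(\delta=\varepsilon\)), giving error \((\log q)^{1-\beta\omega+o(1)}\) with \(\beta\omega>1\), while your extra checks (\(\sigma_0\ge 1/2\) after shrinking \(\varepsilon\), and reading \(N(\sigma_0,T,\chi)\) as counting zeros with \(|\im z|\le T\)) fill in details the paper leaves implicit.
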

\begin{proof}
    In Lemma \ref{lemma1forthm4}, we set \(\sigma_0 = \sigma-\omega-\varepsilon\), \(\sigma_1 = \sigma-\omega\) and \(t=0\). Then, combining this with Lemma \ref{lemma2forthm2and4}, the proof is complete.
\end{proof}

\section{Proof of Theorem \ref{thm1}}\label{sectionforthm1}
In this section, we apply the long resonator method from \cite{Aistleitner2019QJMath} to prove Theorem \ref{thm1}. To this end, we first establish an effective approximate formula for \(L(1,\chi^j)\). Set \(Y = \exp\big((\log q)^{20}\big)\). By Lemma \ref{lemma1forthm1}, for all \(j \in \{1,\dots,\ell\}\), we have
\[
L(1,\chi^j) = L(1,\chi^j;Y)\big(1+O\big((\log q)^{-2}\big)\big), \ \ \forall \chi^j \in \mcg_q^\ast.
\]
Taking the product over \(1 \le j \le \ell\), we obtain
\begin{align}\label{approximateforthm1}
    \prod_{j=1}^\ell L(1,\chi^j) = \prod_{j=1}^\ell L(1,\chi^j;Y)\big(1+O\big((\log q)^{-2}\big)\big), \ \ \forall \chi \in \mcg_\ell(q).
\end{align}
\par
Set \(X=\log q \log_2 q /\delta\), where \(\delta>0\) will be chosen later. Note that \(X \le Y\). Following \cite{Aistleitner2019QJMath} (see also \cite{yang2023omega}), we define the resonator
\[
R(\chi) = \prod_{p \le X}(1-r(p)\chi(p))\inv = \sum_{n \in \mbn}r(n)\chi(n).
\]
Here \(r(n)\) is a completely multiplicative function, whose values at primes 
\(p\) satisfy
\begin{align*}
    r(p) =
    \begin{cases}
        1-\frac{p}{X}, ~ &\operatorname{if} p \le X, \\
		0, ~ &\operatorname{if} p > X.
    \end{cases}
\end{align*}
It remains to establish extreme values of \(\prod_{j=1}^\ell L(1,\chi^j;Y)\). For this purpose, we define the following two sums:
\[
S_1 = \sum_{\chi \in \mcg_q} |R(\chi)|^2 \ \text{and} \ \ S_2 = \sum_{\chi \in \mcg_q}\prod_{j=1}^\ell L(1,\chi^j;Y)|R(\chi)|^2.
\]
\par
For \(S_1\), the orthogonality of characters gives
\begin{align}\label{S1equationforthm1}
    S_1 = \sum_{\chi \in \mcg_q} \sum_{m,n \in \mbn}r(m)r(n)\chi(m)\overline{\chi(n)} = \phi(q)\sum_{\substack{m,n \in \mbn \\ m\equiv n\pmod{q}\\(n,q)=1}}r(m)r(n).
\end{align}
According to the definition of \(r(n)\), we can write \(L(1,\chi^j;Y)=\sum_{k_j \ge 1}b_{k_j}\chi(k_j)^j\). Here, \(b_{k_j}=1/k_j\) if all prime factors of \(k_j\) are not exceeding \(Y\), and \(b_{k_j}=0\) otherwise. On the other hand, for \(S_2\), by a similar argument, we use the orthogonality of characters to obtain 
\begin{align*}
    S_2 &\,= \sum_{\chi \in \mcg_q} \sum_{k_1,k_2,\dots,k_\ell \ge 1}b_{k_1}b_{k_2}\cdots b_{k_\ell}\chi(k_1)\chi(k_2)^2\cdots\chi(k_\ell)^\ell\sum_{m,n \in \mbn}r(m)r(n)\chi(m)\overline{\chi(n)} \\
    &\,= \sum_{k_1,k_2,\dots,k_\ell \ge 1}b_{k_1}b_{k_2}\cdots b_{k_\ell}\sum_{m,n \in \mbn}r(m)r(n)\sum_{\chi \in \mcg_q}\chi(k_1 k_2^2\cdots k_\ell^\ell m)\overline{\chi(n)} \\
    &\,= \phi(q)\sum_{k_1,k_2,\dots,k_\ell \ge 1}b_{k_1}b_{k_2}\cdots b_{k_\ell}\sum_{\substack{m,n \in \mbn \\ Km\equiv n\pmod{q}\\(n,q)=1}}r(m)r(n).
\end{align*}
Here, in the last step, we set \(K\coloneqq k_1 k_2^2\cdots k_\ell^\ell\). Noting that \(r(p)\) vanishes at primes \(p > X\), we define \(L(1,\chi;X)= \prod_{p \le X}\big(1-\chi(p)p\inv\big)\inv= \sum_{k \ge 1}a_k\chi(k)\). We observe that \(b_k \ge a_k\) for all \(k \ge 1\), since \(Y \ge X\). Thus, we have
\[
 S_2 \ge \phi(q)\sum_{k_1,k_2,\dots,k_\ell \ge 1}a_{k_1}a_{k_2}\cdots a_{k_\ell}\sum_{\substack{m,n \in \mbn \\ Km\equiv n\pmod{q}\\(n,q)=1}}r(m)r(n).
\]
Using that \(a_k\ge 0\) and that \(r(n)\) is completely multiplicative, we obtain
\begin{align}\label{S2lowerforthm1}
    S_2&\,\ge\phi(q)\sum_{k_1,k_2,\dots,k_\ell \ge 1}a_{k_1}a_{k_2}\cdots a_{k_\ell}\sum_{\substack{m,u \in \mbn: K \mid n \\ Km\equiv n\pmod{q}\\(n,q)=1}}r(m)r(n) \nonumber \\ 
    &\,=\phi(q)\sum_{k_1,k_2,\dots,k_\ell \ge 1}a_{k_1}a_{k_2}\cdots a_{k_\ell}r(K)\sum_{\substack{m,u \in \mbn \\ Km\equiv Ku\pmod{q}\\(u,q)=1}}r(m)r(u) \nonumber \\ 
    &\,=\phi(q)\sum_{k_1,k_2,\dots,k_\ell \ge 1}a_{k_1}a_{k_2}\cdots a_{k_\ell}r(K)\sum_{\substack{m,u \in \mbn \\ m\equiv u\pmod{q}\\(u,q)=1}}r(m)r(u).
\end{align}
The last step follows from the fact that \(q\) is prime. Consequently, we get the identity (see \cite[p. 10]{yang2023omega})
\[
\phi(q)\sum_{\substack{m,u \in \mbn \\ Km\equiv Ku\pmod{q}\\(u,q)=1}}r(m)r(u)=\phi(q)\sum_{\substack{m,u \in \mbn \\ m\equiv u\pmod{q}\\(u,q)=1}}r(m)r(u).
\]
Combining \eqref{S1equationforthm1} and \eqref{S2lowerforthm1}, we have 
\begin{align*}
    \frac{S_2}{S_1} &\,\ge \sum_{k_1,k_2,\dots,k_\ell \ge 1}a_{k_1}a_{k_2}\cdots a_{k_\ell}r(K) = \prod_{j=1}\sum_{k_j\ge1}a_{k_j}r(k_j)^j \\
    &\,=\prod_{j=1}^\ell\prod_{p \le X}\Big(1-\frac{r(p)^j}{p} \Big)\inv= \prod_{j=1}^\ell\prod_{p \le X} \Big(\frac{p}{p-1} \cdot \frac{p-1}{p-r(p)^j}\Big) \eqqcolon \prod_{j=1}^\ell \mcp_1\mcp_2.
\end{align*}
\par
We now estimate \(\mcp_1\) and \(\mcp_2\) respectively. For \(\mcp_1\), we apply Mertens’ theorem as given in \cite[Eq. (3.28)]{Rosser1962approximate}. More precisely,
\begin{align}\label{P1forthm1}
    \mcp_1 = \prod_{p \le X}\frac{p}{p-1} \ge \mme^\gamma\log X \Big(1-\frac{1}{2(\log X)^2}\Big).
\end{align}
For \(\mcp_2\), the bound \(r(p)^j = (1-p/X)^j\ge1-jp/X\) implies that
\[
P_2 = \prod_{j=1}^\ell \frac{p-1}{p-r(p)^j} \ge \exp\Big(\sum_{p \le X}\log \Big(1-\frac{jp}{jp+(p-1)X}\Big) \Big).
\]
By the prime number theorem, we have 
\[
\sum_{p \le X}\frac{p}{jp+X(p-1)} \le \sum_{p \le X}\Big(\frac{1}{X}+\frac{2}{pX}\Big) \le \frac{1}{\log X}+O\big((\log X)^{-3}\big).
\]
Therefore, combining this with the inequality \(\mme^{-x}\ge 1-x\), we obtain
\begin{align}\label{P2forthm1}
    \mcp_2 \ge \exp\Big(-\frac{j}{\log X}\big(1+O\big((\log X)^{-2}\big)\big)\Big) \ge 1-\frac{j}{\log X}\big(1+O\big((\log X)^{-2}\big)\big).
\end{align}
From \eqref{P1forthm1} and \eqref{P2forthm1}, we deduce that
\[
\mcp_1\mcp_2 \ge \mme^\gamma\log X\Big(1-\frac{j}{\log X}\big(1+O\big((\log X)^{-2}\big)\big)\Big)
\]
and
\begin{align}\label{S2S1forthm1}
    \frac{S_2}{S_1} \ge \mme^{\ell\gamma}(\log X)^\ell\prod_{j=1}^\ell\Big(1-\frac{j}{\log X}\big(1+O\big((\log X)^{-2}\big)\big)\Big).
\end{align}
A direct computation shows that the product on the right-hand side equals
\[
1-\frac{1}{\log X}\sum_{j=1}^\ell j +O_\ell\big((\log X)^{-2}\big) = 1 - \frac{(\ell+1)\ell}{2}\frac{1}{\log X}+O_\ell\big((\log X)^{-2}\big).
\]
Substituting this into \eqref{S2S1forthm1}, we have
\begin{align}\label{S2S1finalforthm1}
    \frac{S_2}{S_1} \ge \mme^{\ell\gamma}\Big((\log X)^\ell-\frac{(\ell+1)\ell}{2}(\log X)^{\ell-1} +O_\ell\big((\log X)^{\ell-2}\big)\Big).
\end{align}
\par
To apply the approximate formula \eqref{approximateforthm1}, it remains to show that the contributions of \(\chi_0\) and \(\chi_\mme\) are negligible. We consider the contribution of \(\chi_0\), while the case of \(\chi_\mme\) is analogous. According to \cite[pp. 839-841]{Aistleitner2019QJMath}, we have 
\[
|R(\chi_0)|^2 \le \exp\Big((1+o(1)\frac{2\log q}{\delta}\Big),\ \ L(1,\chi_0;Y) \ll (\log q)^{20}
\]
and 
\[
S_1 \ge \exp\Big((1+o(1)\Big(1+\frac{2-\log 4}{\delta}\Big)\log q\Big).
\]
Therefore, we require \(1+(2-\log 4)/\delta > 2/\delta\), that is, \(\delta > \log 4\). Recalling that \(X=\log q \log_2 q/\delta\), and taking \(\delta\) sufficiently close to \(\log4\) in \eqref{S2S1finalforthm1}, we obtain
\[
\frac{S_2}{S_1} \ge \mme^{\ell\gamma}\big\{(\log_2 q)^\ell + \ell(\log_3 q - C(\ell))(\log_2 q)^{\ell-1} + O_\ell\big((\log_2 q)^{\ell-2}(\log_3 q)^2\big) \big\},
\]
where \(C(\ell) = (\ell+1)/2 + \log_2 4\). Combining the approximate formula \eqref{approximateforthm1} with the above estimate for \(S_2/S_1\), we complete the proof of Theorem \ref{thm1}.

\section{Proof of Theorem \ref{thm2}}\label{sectionforthm2}
In this section, we continue to apply the long resonator method developed in \cite{Aistleitner2019QJMath}. Let \(X=\kappa\log q \log_2 q\), where \(\kappa>0\) will be chosen later. We define the resonator
\[
R(\chi) = \prod_{p \le X}(1-r(p)\chi(p))\inv = \sum_{n \in \mbn}r(n)\chi(n).
\]
Unlike in Section \ref{sectionforthm1}, the completely multiplicative function \(r(n)\) is defined by
\begin{align*}
    r(p) =
    \begin{cases}
        1-\big(\frac{p}{X}\big)^{\sigma}, ~ &\operatorname{if} p \le X, \\
		0, ~ &\operatorname{if} p > X.
    \end{cases}
\end{align*}
at primes \(p\). By the prime number theorem, we obtain 
\begin{align}\label{Rupperforthm2}
    |R(\chi)|^2 \le q^{2\kappa\sigma+o(1)}.
\end{align}
Set \(Y=(\log q)^{\frac{3}{\sigma-1/2}}\). Since \(\ell\) is a fixed, Lemma \ref{lemma3forthm2} yields the following approximate formula
\begin{align}\label{approximateforthm2}
    \prod_{j=1}^\ell L(\sigma,\chi^j) = \prod_{j=1}^\ell L(\sigma,\chi^j;Y)\big(1+O\big((\log q)^{-\frac{1}{4}}\big)\big), \ \ \forall \chi \in \mcg_q \setminus (E(q) \cup \{\chi_0\}).
\end{align}
Here, the cardinality of \(E(q)\) satisfies
\begin{align}\label{Equpperforthm2}
    \#E(q) \le \ell\cdot\#\mce(q) \ll_\ell q^{\frac{9/4-3\sigma/2}{7/4-\sigma/2}+o(1)}.
\end{align}
Noting that 
\begin{align}\label{expforthm2}
\prod_{j=1}^\ell |L(\sigma,\chi^j;Y)| = \exp\Big(\re\sum_{j=1}^\ell \sum_{p \le Y}\frac{\chi(p)^j}{p^\sigma} +O(1) \Big),   
\end{align}
we next estimate the sum in the exponent on the right-hand side.
\par
To this end, we define the following two quantities:
\[
S_1 = \sum_{\chi \in \mcg_q} |R(\chi)|^2 \ \text{and} \ \ S_2 = \sum_{\chi \in \mcg_q}\re\Big(\sum_{j=1}^\ell \sum_{p \le Y}\frac{\chi(p)^j}{p^\sigma} \Big) |R(\chi)|^2.
\]
For \(S_1\), expanding \(|R(\chi)|^2\) and using the orthogonality of characters, we get
\begin{align}\label{S1equationforthm2}
    S_1 = \sum_{\chi \in \mcg_q} \sum_{m,n \in \mbn}r(m)r(n)\chi(m)\overline{\chi(n)} = \phi(q)\sum_{\substack{m,n \in \mbn \\ m\equiv n\pmod{q}\\(n,q)=1}}r(m)r(n).
\end{align}
For \(S_2\), we again apply the orthogonality of characters to obtain
\begin{align*}
    S_2 &\,= \sum_{j=1}^\ell \sum_{p \le Y}\frac{1}{p^\sigma}\re\Big(\sum_{m,n \in \mbn}r(m)r(n)\sum_{\chi \in \mcg_q}\chi(p^jm)\overline{\chi(n)} \Big) \\
    &\,= \phi(q)\sum_{j=1}^\ell \sum_{p \le Y}\frac{1}{p^\sigma}\sum_{\substack{m,n\in \mbn \\ p^jm\equiv n \pmod{q}\\(n,q)=1 }}r(m)r(n).
\end{align*}
Since \(r(n)\) is non-negative and completely multiplicative, and \(Y \ge X\), we have
\[
S_2 \ge \phi(q)\sum_{j=1}^\ell \sum_{p \le X}\frac{r(p)^j}{p^\sigma}\sum_{\substack{m,u\in \mbn \\ p^jm\equiv p^ju \pmod{q}\\(u,q)=1 }}r(m)r(u).
\]
For all \(p \le X = \kappa\log q\log_2 q\) and all \(j \in \{1,\dots,\ell\}\), we have \((p^j,q)=1\). Thus, 
\[
\phi(q)\sum_{\substack{m,u\in \mbn \\ p^jm\equiv p^ju \pmod{q}\\(u,q)=1 }}r(m)r(u)= \phi(q)\sum_{\substack{m,u\in \mbn \\ m\equiv u \pmod{q}\\(u,q)=1 }}r(m)r(u).
\]
Together with the above equation and \eqref{S1equationforthm2}, this yields the lower bound
\begin{align}\label{S2S1forthm2}
\frac{S_2}{S_1} \ge \sum_{j=1}^\ell \sum_{p \le X}\frac{r(p)^j}{p^\sigma}.
\end{align}
By \cite[Eq. (4.10)]{qiyuyang2026joint}, we have
\[
\sum_{j=1}^\ell \sum_{p \le X} \frac{r(p)^j}{p^\sigma} \ge \big(S(\sigma,\ell) +o(1) \big) \frac{X^{1-\sigma}}{\log X},
\]
where
\begin{align*}
S(\sigma,\ell) & \, = \frac{\ell}{1-\sigma} - \frac{(\ell+1)\ell}{2} + \sum_{m=2}^\ell(-1)^m \binom{\ell +1}{m+1} \frac{1}{1+\sigma(m-1)} \\
   & \, = \frac{\ell}{1-\sigma} + \sum_{m=1}^\ell (-1)^m \binom{\ell+1}{m+1} \frac{1}{1+\sigma(m-1)}.
\end{align*}
Substituting this into \eqref{S2S1forthm2} and recalling the definition of \(X\), we obtain
\begin{align}\label{S2S1finalforthm2}
    \frac{S_2}{S_1} \ge \big(S(\sigma,\ell) +o(1) \big)\frac{\kappa^{1-\sigma}(\log q)^{1-\sigma}}{(\log_2 q)^\sigma}.
\end{align}
\par
Note that our approximate formula \eqref{approximateforthm2} may fail on \(E(q)\cup\{\chi_0\}\). It therefore remains to show that the contribution from this set is negligible. To this end, we require a lower bound for \(S_1\), which is given in \cite[p. 16]{yang2023omega} as
\begin{align}\label{S1lowerforthm2}
    S_1 \ge (q-1)\sum_{n\in\mbn}r(n)^2 \ge q^{1+\kappa\sigma(1-c(\sigma))+o(1)},
\end{align}
where
\[
c(\sigma) \coloneqq \int_0^1 \frac{\mathrm{d} t}{2t^{-\sigma}-1}.
\]
Combining \eqref{Rupperforthm2}, \eqref{Equpperforthm2} and \eqref{S1lowerforthm2}, we require that
\begin{align}\label{kappaforthm2}
2\kappa\sigma + \frac{9/4-3\sigma/2}{7/4-\sigma/2} < 1+\kappa\sigma(1-c(\sigma)).   
\end{align}
We choose \(\kappa\) such that \eqref{kappaforthm2} holds. This ensures that the total contribution of characters in \(E(q)\cup\{\chi_0\}\) is negligible. Then, by combining \eqref{approximateforthm2}, \eqref{expforthm2} and \eqref{S2S1finalforthm2}, we complete the proof of Theorem \ref{thm2}.

\section{Proof of Theorem \ref{thm3}}\label{sectionforthm3}
Following the approach of Sections \ref{sectionforthm1} and \ref{sectionforthm2}, we first establish an effective approximate formula. Set \(Y=\exp\big((3\log q)^2 \big)\). Then for some \(A>0\), we have
\begin{align}\label{approximateforthm3}
    (-1)^\ell \prod_{j=1}^\ell \frac{L^\prime}{L}\big(1,\chi^j \big) = \prod_{j=1}^\ell \sum_{n \le Y}\frac{\Lambda(n)\chi^j(n)}{n}+O\big(q^{-A}(\log Y)^{\ell-1}\big),\ \ \forall \chi \in \mcg_\ell(q).
\end{align}
In fact, Lemma \ref{lemma1forthm3} implies that for each \(j \in \{1,\dots,\ell\}\), we have
\[
    -\frac{L^\prime}{L}(1,\chi^j) = \sum_{n \le Y}\frac{\Lambda(n)\chi(n)^j}{n}+O(q^{-A}), \ \ \forall \chi^j \in \mcg_q^\ast.
\]
Let \(D_j(\chi)\) and \(\mce\) denote the main term and the error term, respectively. Since \(\ell\) is a fixed,
\[
(-1)^\ell \prod_{j=1}^\ell \frac{L^\prime}{L}\big(1,\chi^j \big) = \prod_{j=1}^\ell (D_j(\chi)+\mce) = \prod_{j=1}^\ell D_j(\chi) + \sum_{\emptyset \neq I \subset\{1,\dots,\ell\}}\Big(\prod_{j \not\in I}D_j(\chi)\Big)\Big(\prod_{j \in I}\mce \Big).
\]
We observe that, in the sum over the non-empty set \(I\) on the right-hand side, the contribution corresponding to \(|I|=1\) is largest. Noting that 
\[
|D_j(\chi)| \le \sum_{n \le Y}\frac{\Lambda(n)}{n} \ll \log Y,
\]
we see that \eqref{approximateforthm3} holds, since the total contribution of the error terms is bounded by \(q^{-A}(\log q)^{\ell-1}\). 
\par
Next, we study extreme values of \(\re\big(\prod_{j=1}^\ell D_j(\chi)\big)\). Set \(X= \tau\log q\log_2 q\), where \(\tau>0\) will be chosen later. We use the same resonator \(R(\chi) = \sum_{n \in \mbn}r(n)\chi(n)\) as in Section \ref{sectionforthm1}, and define
\[
S_1 = \sum_{\chi \in \mcg_q} |R(\chi)|^2,   \ \ S_2 = \sum_{\chi \in \mcg_q}\re\Big(\prod_{j=1}^\ell D_j(\chi)\Big)|R(\chi)|^2.
\]
For \(S_1\), by a similar argument, the orthogonality of characters gives
\begin{align}\label{S1equationforthm3}
    S_1 = \sum_{\chi \in \mcg_q} \sum_{m,n \in \mbn}r(m)r(n)\chi(m)\overline{\chi(n)} = \phi(q)\sum_{\substack{m,n \in \mbn \\ m\equiv n\pmod{q}\\(n,q)=1}}r(m)r(n).
\end{align}
By expanding \(|R(\chi)|^2\) and interchanging the order of summation, we obtain that \(S_2\) equals
\begin{align*}
    &\sum_{\chi \in \mcg_q} \re \Big(\sum_{n_1,n_2,\dots,n_\ell \le Y} \frac{\Lambda(n_1)\Lambda(n_2)\cdots\Lambda(n_\ell)}{n_1 n_2 \cdots n_\ell}\chi\big(n_1n_2^2\cdots n_\ell^\ell\big)\Big)\sum_{m,n\in\mbn}r(m)r(n)\chi(m)\overline{\chi(n)} \\
    &= \sum_{n_1,n_2,\dots,n_\ell \le Y} \frac{\Lambda(n_1)\Lambda(n_2)\cdots\Lambda(n_\ell)}{n_1 n_2 \cdots n_\ell} \re\Big(\sum_{m,n\in\mbn}r(m)r(n)\sum_{\chi \in \mcg_q}\chi\big(n_1n_2^2\cdots n_\ell^\ell m\big)\overline{\chi(n)} \Big).
\end{align*}
Using again the orthogonality of characters, we deduce that
\[
S_2 = \sum_{n_1,n_2,\dots,n_\ell\le Y} \frac{\Lambda(n_1)\Lambda(n_2)\cdots\Lambda(n_\ell)}{n_1 n_2 \cdots n_\ell}  \phi(q) \sum_{\substack{m,n\in\mbn \\ n_1n_2^2\cdots n_\ell^\ell m \equiv n \pmod{q}\\(n,q)=1}}r(m)r(n).
\]
Let \(N \coloneqq n_1n_2^2\cdots n_\ell^\ell\). Since \(r(n)\) is non-negative and completely multiplicative, and \(Y \ge X\), we have the following lower bound
\begin{align}\label{S2lowerforthm3}
    S_2 &\,\ge \phi(q)\sum_{n_1,n_2,\dots,n_\ell\le X} \frac{\Lambda(n_1)\Lambda(n_2)\cdots\Lambda(n_\ell)}{n_1 n_2 \cdots n_\ell}  \sum_{\substack{m,n\in\mbn:Nm \mid n \\ Nm \equiv n \pmod{q}\\(n,q)=1}}r(m)r(n) \nonumber \\
    &\, =\phi(q)\sum_{n_1,n_2,\dots,n_\ell\le X} \frac{\Lambda(n_1)\Lambda(n_2)\cdots\Lambda(n_\ell)}{n_1 n_2 \cdots n_\ell}r(N)\sum_{\substack{m,u\in\mbn \\ Nm \equiv Nu \pmod{q}\\(Nu,q)=1}}r(m)r(u) \nonumber \\
    &\, = \phi(q)\sum_{n_1,n_2,\dots,n_\ell\le X} \frac{\Lambda(n_1)\Lambda(n_2)\cdots\Lambda(n_\ell)}{n_1 n_2 \cdots n_\ell}r(N)\sum_{\substack{m,u\in\mbn \\ m \equiv u \pmod{q}\\(u,q)=1}}r(m)r(u).
\end{align}
In the last step, we use the fact that \(q\) is prime. Combining \eqref{S1equationforthm3} and \eqref{S2lowerforthm3}, we obtain 
\begin{align}\label{S2S1forthm3}
    \frac{S_2}{S_1} &\, \ge \sum_{n_1,n_2,\dots,n_\ell\le X} \frac{\Lambda(n_1)\Lambda(n_2)\cdots\Lambda(n_\ell)}{n_1 n_2 \cdots n_\ell}r(n_1)r(n_2)^2\cdots r(n_\ell)^\ell \nonumber \\
    &\,=\prod_{j=1}^\ell\sum_{n_j \le X}\frac{\Lambda(n_j)}{n_j}r(n_j)^j \ge \prod_{j=1}^\ell\sum_{p \le X}\frac{\log p}{p}r(p)^j.
\end{align}
\par
Define
\begin{align}\label{PjXforthm3}
    P_j(X) \coloneqq \sum_{p \le X}\frac{\log p}{p}r(p)^j = \sum_{p \le X}\frac{\log p}{p} \Big( 1-\frac{p}{X}\Big)^j.
\end{align}
Then by the binomial theorem, we obtain 
\begin{align*}
    P_j(X) &\,= \sum_{p \le X}\frac{\log p}{p} \sum_{m=0}^j (-1)^m\binom{j}{m}\Big(\frac{p}{X}\Big)^m = \sum_{m=0}^j (-1)^m\binom{j}{m} \frac{1}{X^m}\sum_{p \le X}p^{m-1}\log p \\
    &\,= \sum_{p \le X}\frac{\log p}{p} + \sum_{m=1}^j (-1)^m\binom{j}{m} \frac{1}{X^m}\sum_{p \le X}p^{m-1}\log p.
\end{align*}
For the first sum, the prime number theorem gives 
\begin{align*}
\sum_{p \le X}\frac{\log p}{p} &\,= \log X - \gamma -\sum_{k \ge 2}\sum_{p}\frac{\log p}{p^k} + O\big(\exp(-A\sqrt{\log X}) \big) \\
&\, =\log X - \gamma -\sum_{p}\frac{\log p }{p(p-1)} + O\big(\exp(-A\sqrt{\log X}) \big) 
\end{align*}
for some \(A >0\), see \cite[Eq. (2.31)]{Rosser1962approximate}. For \(m \ge 1\), 
\[
\frac{1}{X^m}\sum_{p \le X}p^{m-1}\log p = \frac{1}{m}+o(1).
\]
Thus, the remaining sum is given by 
\[
\sum_{m=1}^j (-1)^m\binom{j}{m} \frac{1}{m} +o(1) = \int_0^1 \frac{(1-x)^j-1}{x}\mathrm{d}x +o(1) = - \sum_{k=1}^j \frac{1}{k} +o(1).
\]
Let \(H_j\) denote the \(j\)-th harmonic series. Substituting the above into \eqref{PjXforthm3}, we obtain
\begin{align}\label{PjXfinalforthm3}
    P_j(X) = \log X - \gamma -\sum_{p}\frac{\log p }{p(p-1)} - H_j +o(1).
\end{align}
From \eqref{S2S1forthm3} and \eqref{PjXfinalforthm3}, it follows that
\[
    \frac{S_2}{S_1} \ge (\log X)^\ell+ (\ell(P^\ast+1)-(\ell+1))H_\ell)(\log X)^{\ell-1}+O_\ell\big((\log X)^{\ell-2}\big),
\]
where \(P^\ast = - \gamma -\sum_{p}(\log p)/(p(p-1))\).
Recalling that \(X=\tau\log q\log_2 q\), we have
\begin{align}\label{S2S1finalforthm3}
    \frac{S_2}{S_1} \ge &\,(\log_2 q)^\ell+\ell(\log_2 q)^{\ell-1}\log_3 q \nonumber \\
    &\,+(\ell\log\tau+\ell(P^\ast+1)-(\ell+1))H_\ell)(\log_2 q)^{\ell-1} +O_\ell\big((\log_2 q)^{\ell-2}(\log_3 q)^2\big).
\end{align}
\par
To control the contributions of \(\chi_0\) and \(\chi_\mme\), we use the following lower bound for \(S_1\) (see \cite[p. 10]{yang2023omega}):
\begin{align}\label{S1lowerforthm3}
    S_1 \ge (q-1)\sum_{n \in\mbn}r(n)^2 \ge q^{1+\tau(2-\log 4)+o(1)}.
\end{align}
Furthermore, 
\[
|R(\chi_0)|^2 \le q^{2\tau+o(1)}, \ \ \Big|\re\Big(\prod_{j=1}^\ell D_j(\chi_0)\Big)\Big| \ll_\ell \log Y \ll q^{o(1)}.
\]
Therefore,
\[
\Big|\re\Big(\prod_{j=1}^\ell D_j(\chi_0) |R(\chi_0)|^2 \Big)\Big| \le  q^{2\tau+o(1)}.
\]
Hence, it is enough to impose \(2\tau < 1+\tau(2-\log 4)\), that is, \(\tau < (\log 4)\inv\) so that the contribution of \(\chi_0\) can be neglected. The case of \(\chi_\mme\) is completely analogous. Choosing \(\tau\) sufficiently close to \((\log 4)\inv\) in \eqref{S2S1finalforthm3} and combining with the approximate formula \eqref{approximateforthm3},  we complete the proof of Theorem \ref{thm3}.

\section{Proof of Theorem \ref{thm4}}\label{sectionforthm4}
Set \(Y=(\log q)^\beta\), where \(\beta>1\) satisfies the conditions in Lemma \ref{lemma2forthm4}. By an argument similar to that in Section \ref{sectionforthm3}, Lemma \ref{lemma2forthm4} yields
\begin{align}\label{approximateforthm4}
(-1)^\ell\prod_{j=1}^\ell\frac{L^\prime}{L}(\sigma,\chi^j) =  \prod_{j=1}^\ell\sum_{n\le Y}\frac{\Lambda(n)\chi(n)^j}{n^\sigma}+O\big((\log q)^{-A}\big),\ \ \forall\chi \in \mcg_\ell(q)\setminus E^\prime(q).
\end{align}
Here, the set \(E^\prime(q)\) satisfies
\begin{align}\label{Equpperforthm4}
\#E^\prime(q) \ll q^{\frac{3(1-\sigma+\varepsilon)}{2-\sigma+\varepsilon}+o(1)}.
\end{align}
\par
Let \(X=\eta\log q\log_2 q\), where \(\eta\) will be chosen later. Since \(\beta >1\), we have \(Y \ge X\). We use the same resonator \(R(\chi)=\sum_{n\in\mbn}r(n)\chi(n)\) as in Section \ref{sectionforthm2}, and define
\[
S_1 = \sum_{\chi \in \mcg_q} |R(\chi)|^2,   \ \ S_2 = \sum_{\chi \in \mcg_q}\re\Big(\prod_{j=1}^\ell D_j(\sigma,\chi)\Big)|R(\chi)|^2,
\]
where 
\[
D_j(\sigma,\chi) = \sum_{n \le Y}\frac{\Lambda(n)\chi(n)^j}{n^\sigma}.
\]
\par
For \(S_1\), we have 
\begin{align}\label{S1equationforthm4}
    S_1 = \sum_{\chi \in \mcg_q} \sum_{m,n \in \mbn}r(m)r(n)\chi(m)\overline{\chi(n)} = \phi(q)\sum_{\substack{m,n \in \mbn \\ m\equiv n\pmod{q}\\(n,q)=1}}r(m)r(n).
\end{align}
On the other hand, by the same argument as in Section \ref{sectionforthm3}, expanding \(|R(\chi)|^2\) and interchanging the order of summation, we obtain that \(S_2\) equals
\begin{align*}
    &\sum_{\chi \in \mcg_q} \re \Big(\sum_{n_1,n_2,\dots,n_\ell \le Y} \frac{\Lambda(n_1)\Lambda(n_2)\cdots\Lambda(n_\ell)}{n_1^\sigma n_2^\sigma \cdots n_\ell^\sigma}\chi\big(n_1n_2^2\cdots n_\ell^\ell\big)\Big)\sum_{m,n\in\mbn}r(m)r(n)\chi(m)\overline{\chi(n)} \\
    &= \sum_{n_1,n_2,\dots,n_\ell \le Y} \frac{\Lambda(n_1)\Lambda(n_2)\cdots\Lambda(n_\ell)}{n_1^\sigma n_2^\sigma \cdots n_\ell^\sigma} \re\Big(\sum_{m,n\in\mbn}r(m)r(n)\sum_{\chi \in \mcg_q}\chi\big(n_1n_2^2\cdots n_\ell^\ell m\big)\overline{\chi(n)} \Big) \\
    &=\sum_{n_1,n_2,\dots,n_\ell\le Y} \frac{\Lambda(n_1)\Lambda(n_2)\cdots\Lambda(n_\ell)}{n_1^\sigma n_2^\sigma \cdots n_\ell^\sigma}  \phi(q) \sum_{\substack{m,n\in\mbn \\ N m \equiv n \pmod{q}\\(n,q)=1}}r(m)r(n).
\end{align*}
In the last step, we use the orthogonality of characters and set \(N \coloneqq n_1n_2^2\cdots n_\ell^\ell \). Since \(r(n)\) is non-negative and completely multiplicative, and \(Y \ge X\), we have
\begin{align*}
    S_2 &\, \ge \phi(q) \sum_{n_1,n_2,\dots,n_\ell\le X} \frac{\Lambda(n_1)\Lambda(n_2)\cdots\Lambda(n_\ell)}{n_1^\sigma n_2^\sigma \cdots n_\ell^\sigma}  \sum_{\substack{m,n\in\mbn \\ N m \equiv n \pmod{q}\\(n,q)=1}}r(m)r(n) \\
    &\,\ge \phi(q)\sum_{n_1,n_2,\dots,n_\ell\le X} \frac{\Lambda(n_1)\Lambda(n_2)\cdots\Lambda(n_\ell)}{n_1^\sigma n_2^\sigma \cdots n_\ell^\sigma}  \sum_{\substack{m,n\in\mbn:Nm \mid n \\ Nm \equiv n \pmod{q}\\(n,q)=1}}r(m)r(n) \\
    &\, =\phi(q)\sum_{n_1,n_2,\dots,n_\ell\le X} \frac{\Lambda(n_1)\Lambda(n_2)\cdots\Lambda(n_\ell)}{n_1^\sigma n_2^\sigma \cdots n_\ell^\sigma}r(N)\sum_{\substack{m,u\in\mbn \\ Nm \equiv Nu \pmod{q}\\(Nu,q)=1}}r(m)r(u).
\end{align*}
Since \(q\) is prime, we have the following lower bound 
\begin{align}\label{S2lowerforthm4}
        S_2 \ge \phi(q)\sum_{n_1,n_2,\dots,n_\ell\le X} \frac{\Lambda(n_1)\Lambda(n_2)\cdots\Lambda(n_\ell)}{n_1^\sigma n_2^\sigma \cdots n_\ell^\sigma}r(N)\sum_{\substack{m,u\in\mbn \\ m \equiv u \pmod{q}\\(u,q)=1}}r(m)r(u).
\end{align}
Combining \eqref{S1equationforthm4} and \eqref{S2lowerforthm4}, we have
\begin{align}\label{S2S1forthm4}
    \frac{S_2}{S_1} \ge \sum_{n_1,n_2,\dots,n_\ell\le X} \frac{\Lambda(n_1)\Lambda(n_2)\cdots\Lambda(n_\ell)}{n_1^\sigma n_2^\sigma \cdots n_\ell^\sigma}r(N) \ge \prod_{j=1}^\ell\sum_{p \le X}\frac{\log p}{p^\sigma}r(p)^j.
\end{align}
\par
Define
\begin{align}\label{PjsigmaXforthm4}
    P_j(\sigma,X) \coloneqq \sum_{p \le X}\frac{\log p}{p^\sigma}r(p)^j = \sum_{p \le X}\frac{\log p}{p^\sigma} \Big( 1-\Big(\frac{p}{X}\Big)^\sigma\Big)^j.
\end{align}
By the binomial theorem, we obtain
\begin{align*}
    P_j(\sigma,X) &\,= \sum_{k=0}^j (-1)^k \binom{j}{k}X^{-k\sigma}\sum_{p \le X} p^{(k-1)\sigma}\log p \\
    &\,= \sum_{p \le X}\frac{\log p}{p^\sigma} + \sum_{k=1}^j (-1)^k \binom{j}{k}X^{-k\sigma}\sum_{p \le X} p^{(k-1)\sigma}\log p.
\end{align*}
For the first sum, the prime number theorem implies
\[
\sum_{p \le X}\frac{\log p}{p^\sigma} = (1+o(1))\frac{X^{1-\sigma}}{1-\sigma}.
\]
For the terms with \(k = 1,\dots,j\), by combining the prime number theorem with partial summation, we obtain
\[
 \sum_{k=1}^j (-1)^k \binom{j}{k}X^{-k\sigma}\sum_{p \le X} p^{(k-1)\sigma}\log p = (1+o(1)) \sum_{k=1}^j (-1)^k \binom{j}{k}X^{1-\sigma}\frac{1}{1+(k-1)\sigma}.
\]
Returning to \eqref{PjsigmaXforthm4}, we have
\[
P_j(\sigma,X) =(1+o(1)) X^{1-\sigma}\sum_{k=0}^j (-1)^k \binom{j}{k} \frac{1}{1+(k-1)\sigma}.
\]
We now focus on the sum on the right-hand side above. Making the change of variable \(\alpha = (1-\sigma)/\sigma\), we obtain that the sum is given by
\begin{align*}
    &\frac{1}{\sigma}\sum_{k=0}^j (-1)^k \binom{j}{k}\frac{1}{k+\alpha} = \frac{1}{\sigma}\sum_{k=0}^j (-1)^k \binom{j}{k}\int_0^1 t^{k+\alpha-1}\mathrm{d}t \\
    &=\frac{1}{\sigma}\int_0^1 t^{\alpha-1}\sum_{k=0}^j (-1)^k \binom{j}{k} t^k\mathrm{d}t = \frac{1}{\sigma} B(\alpha,j+1) = \frac{1}{\sigma}\frac{\Gamma(\alpha)\Gamma(j+1)}{\Gamma(\alpha+j+1)}.
\end{align*}
Here, \(B(m,n)\) and \(\Gamma(n)\) denote the Beta function and the Gamma function, respectively. Using the identities
\[
\frac{1}{\sigma}\Gamma\Big(\frac{1-\sigma}{\sigma}\Big)=\frac{1}{1-\sigma}\Gamma\Big(\frac{1}{\sigma}\Big), \ \Gamma(j+1)=j! \ \text{and} \ \Gamma\Big(j+\frac{1}{\sigma}\Big) = \Gamma\Big(\frac{1}{\sigma}\Big)\prod_{m=0}^{j-1}\Big(m+\frac{1}{\sigma}\Big),
\]
we obtain
\begin{align}\label{PjsigmaXfinalforthm4}
    P_j(\sigma,X) = (1+o(1)) \frac{X^{1-\sigma}}{1-\sigma} j! \prod_{m=0}^{j-1}\Big(m+\frac{1}{\sigma}\Big)\inv.
\end{align}
Consequently, \eqref{S2S1forthm4} and \eqref{PjsigmaXfinalforthm4} give that
\[
\frac{S_2}{S_1} \ge (1+o(1)) \Big(\frac{X^{1-\sigma}}{1-\sigma} \Big)^\ell \prod_{j=1}^\ell \Big(j! \prod_{m=0}^{j-1}\bigg(m+\frac{1}{\sigma}\Big)\inv\bigg).
\]
Recalling that \(X=\eta\log q \log_2 q\), we have
\begin{align}\label{S2S1finalforthm4}
    \frac{S_2}{S_1} \ge
\big(\eta^{\ell(1-\sigma)}H(\sigma,\ell)+o(1) \big)(\log q)^{\ell(1-\sigma)}(\log_2 q)^{\ell(1-\sigma)},
\end{align}
where
\[
H(\sigma,\ell) = \prod_{j=1}^\ell \bigg(\frac{j!}{1-\sigma}\prod_{m=0}^{j-1} \Big(m+\frac{1}{\sigma}\Big)\inv\bigg).
\]
\par
It remains to control the contribution of those characters for which the approximate formula \eqref{approximateforthm4} does not hold. To this end, we use the following lower bound for \(S_1\) (see \cite[p. 16]{yang2023omega}):
\begin{align}\label{S1lowerforthm4}
    S_1 \ge (q-1)\sum_{n\in\mbn}r(n)^2 \ge q^{1+\eta\sigma(1-c(\sigma))+o(1)},
\end{align}
where
\[
c(\sigma) \coloneqq \int_0^1 \frac{\mathrm{d}t}{2t^{-\sigma}-1}.
\]
Furthermore, we have
\[
|R(\chi)|^2 \le q^{2\eta+o(1)}, \ \ |D_j(\sigma,\chi)| \ll \frac{Y^{(1-\sigma)}}{1-\sigma} \ll q^{o(1)}.
\]
Thus, for all characters \(\chi \in \mcg_q\),
\[
\Big|\re\Big(\prod_{j=1}^\ell D_j(\sigma,\chi)|R(\chi)|^2\Big)\Big| \le q^{2\eta+o(1)}.
\]
By combining this with \eqref{Equpperforthm4} and \eqref{S1lowerforthm4}, we require that \(\eta\) satisfies 
\[
2\eta\sigma+ \frac{3(1-\sigma+\varepsilon)}{2-\sigma+\varepsilon} < 1+ \eta\sigma(1-c(\sigma)).
\]
Choosing \(\eta\) to satisfy the above condition and combining \eqref{approximateforthm4} and \eqref{S2S1finalforthm4}, we complete the proof of Theorem \ref{thm4}.

\section*{Acknowledgments}
The author is grateful to the referee for constructive comments and suggestions that greatly improved the quality of the paper. The author would like to thank Prof. Zhonghua Li for his long-standing encouragement. The author also thank Dr. Zikang Dong, Dr. Yutong Song and Dr. Qiyu Yang for their helpful comments.

	\bibliographystyle{siam}
    \bibliography{reference}

@article{aistleitner2016MathAnn,
  author = {C. Aistleitner},
  title = {Lower bounds for the maximum of the {R}iemann zeta function along vertical lines},
  journal = {Math. Ann.},
  volume = {365},
  number = {1-2},
  pages = {473--496},
  year = {2016}
}

@article{aistleitner2019IMRN,
  title={Extreme values of the {R}iemann zeta function on the 1-line},
  author={Aistleitner, Christoph and Mahatab, Kamalakshya and Munsch, Marc},
  journal={Int. Math. Res. Not.},
  volume={IMRN 2019},
  number={22},
  pages={6924--6932},
  year={2019}
}

@article{Aistleitner2019QJMath,
  author = {C. Aistleitner and K. Mahatab and M. Munsch and A. Peyrot},
  title = {On large values of \({L}(\sigma, \chi)\)},
  journal = {Q. J. Math.},
  volume = {70},
  number = {3},
  pages = {831--848},
  year = {2019}
}

@article{bondarenko2017Duke,
  author = {A. Bondarenko and K. Seip},
  title = {Large greatest common divisor sums and extreme values of the {R}iemann zeta function},
  journal = {Duke Math. J.},
  volume = {166},
  number = {9},
  pages = {1685--1701},
  year = {2017}
}

@article{bondarenko2018MathAnn,
    author = {Bondarenko, A and Seip, K},
    title = {Extreme values of the {R}iemann zeta function and its argument},
    journal = {Math. Ann.},
    volume = {372},
    number = {3-4},
    pages = {999--1015},
    year = {2018}
}

@article{bondarenko2023dichotomy,
  title={A dichotomy for extreme values of zeta and {D}irichlet {$L$}-functions},
  author={A. Bondarenko and P. Darbar and M. V. Hagen and W. Heap and K. Seip},
  journal={Bull. Lond. Math. Soc.},
  volume={55},
  number={6},
  pages={2963--2975},
  year={2023}
}

@article{chirre2019extreme,
  title={Extreme values for ${S}_n (\sigma, t)$ near the critical line},
  author={Chirre, Andr{\'e}s},
  journal={J. Number Theory},
  volume={200},
  pages={329--352},
  year={2019}
}

@article{dong2023Onde,
  title = {On derivatives of zeta and {$L$}-functions},
  author = {Zikang Dong and Yutong Song and Weijia Wang and Hao Zhang},
  journal = {Ramanujan J.},
  volume = {66},
  number = {1},
  pages = {5--21},
  year={2025}
}

@article{Granville2001JAMS,
    author = {A. Granville and K. Soundararajan},
    title = {Large character sums},
    journal = {J. Amer. Math. Soc.},
    year = {2001},
    volume ={14},
    pages ={365--397}
}

@article{Granville2003Geom,
    author = {Andrew Granville and Kannan Soundararajan},
    title = {The distribution of values of ${L}(1,\chi_D)$},
    journal = {Geom. Funct. Anal.},
    year = {2003},
    volume = {13},
    number = {5},
    pages = {992--1028}
}

@incollection{Granville2006Ramanujan,
  title = {Extreme values of $|\zeta(1 + it)|$},
  booktitle = {The {R}iemann zeta function and related themes: {P}apers in {H}onour of {P}rofessor {K}. {R}amachandra},
  series = {Ramanujan Math. Soc. Lect. Notes Ser.},
  volume = {2},
  author = {A. Granville and K. Soundararajan},
  publisher = {Ramanujan Math. Soc.},
  address = {Mysore},
  year = {2006},
  pages = {65–-80}
}

@article{heath1992zerofree,
  title = {Zero-free regions for Dirichlet ${L}$-functions, and the least prime in an arithmetic progression},
  author = {Heath-Brown, D. R.},
  year = {1992},
  volume = {64},
  number = {3},
  journal = {Proc. Lond. Math. Soc.},
  pages = {265--338}
}

@article{lamzouri2011IMRN,
    author = {Y. Lamzouri},
    title = {On the distribution of extreme values of zeta and ${L}$-functions in the strip \(1/2 < \sigma <1\)},
    journal = {Int. Math. Res. Not. IMRN},
    year = {2011},
    volume = {2011},
    number = {139},
    pages = {5449--5503}
}

@article{levinson1972harmonic,
  title={{$\Omega$}-Theorems for {R}iemann's Zeta-Function at Harmonic Combinations of Points},
  author={Levinson, Norman},
  journal={Proc. Nat. Acad. Sci.},
  volume={69},
  number={7},
  pages={1657--1658},
  year={1972}
}

@book{Montgomery1971,
  author = {H. L. Montgomery},
  title = {Topics in multiplicative number theory},
  series = {Lecture Notes in Mathematics},
  volume = {227},
  publisher = {Springer-Verlag},
  address = {Berlin-New York},
  year = {1971}
}

@article{Montgomery1977,
  author = {Hough Lowell Montgomery},
  title = {Extreme values of the {R}iemann zeta function},
  journal = {Comment. Math. Helv.},
  volume = {52},
  number = {4},
  pages = {511–-518},
  year = {1977}
}

@article{Mourtada2013IJNT,
   author = {M. R. Mourtada and V. K. Murty},
  title = {Omega theorems for ${L}^\prime(1,\chi_{D})$},
  journal = {Int. J. Number Theory},
  volume = {9},
  number = {3},
  pages = {561–-581},
  year = {2013}
}

@article{Rosser1962approximate,
    author = {J. B. Rosser and L. Schoenfeld},
    title = {Approximate formulas for some functions of prime numbers},
    journal = {Illinois J. Math.},
    year = {1962},
    volume = {6},
    pages ={64--94}
}

@article{soundararajan2008extreme,
  author = {K. Soundararajan},
  title = {Extreme values of zeta and {L}-functions},
  journal = {Math. Ann.},
  volume = {342},
  number = {2},
  pages = {467--486},
  year = {2008}
}

@article{voronin1988lower,
  author = {S. M. Voronin},
  title = {Lower bounds in {R}iemann zeta-function theory},
  journal = {Izv. Akad. Nauk SSSR Ser. Mat.},
  volume = {52},
  number = {4},
  pages = {882--892, 896},
  year = {1988}
}

@article{xumax2024JNT,
    author = {Xu M. W. and Yand D.},
    title = {Extreme values of {D}irichlet polynomials with multiplicative coefficients},
    journal = {J. Number Theory},
    volume = {258},
    pages = {173-180},
    year = {2024}
}

@article{XiaoYang2022BAMS,
  author = {X. Xiao and Q. Yang},
  title = {A note on large values of \({L}(\sigma, \chi)\)},
  journal = {Bull. Aust. Math. Soc.},
  volume = {105},
  number = {3},
  pages = {412–-418},
  year = {2022}
}

@article{yang2023BLMS,
  title={Extreme values of derivatives of zeta and {$L$}-functions},
  author = {Daodao Yang},
  journal={Bull. Lond. Math. Soc.},
  volume={56},
  number={1},
  pages={79--95},
  year={2023}
}

@article{yang2023omega,
  title = {Omega theorems for logarithmic derivatives of zeta and ${L}$-functions},
  author = {D. Yang},
  journal = {Preprint, arXiv:2311.16371},
  year = {2023}
}

@article{qiyuyang2026joint,
    author = {Q. Yang and S. Zhao},
    title = {Joint extreme values of the {R}iemann zeta function at harmonic points},
    journal = {Preprint, arXiv:2601.02623},
    year ={2026} 
}

@article{qiyu2024JNT,
    title = {Large values of $\zeta(s)$ for $1/2<${R}e$(s)<1$},
journal = {J. Number Theory},
volume = {254},
pages = {199-213},
year = {2024},
author ={Qiyu Yang}
}
\end{document}